\def \R {\mathbb{R}}
\def \de {\partial}
\def \e {\varepsilon}
\def \LL {\mathcal{L}}
\def \xp {\mathbb{X}_p(\Omega)}
\theoremstyle{definition}
\newtheorem{definition}{Definition}[section]
\newtheorem{remark}[definition]{Remark}
\theoremstyle{plain}
\newtheorem{theorem}[definition]{Theorem}
\newtheorem{proposition}[definition]{Proposition}
\newtheorem{lemma}[definition]{Lemma}
\newtheorem{corollary}[definition]{Corollary}
\numberwithin{equation}{section}
\begin{document}

 \title[A Brezis-Oswald approach for mixed operators]{A Brezis-Oswald approach for mixed local and nonlocal operators}
 \author[S.\,Biagi]{Stefano Biagi}
 \author[D.\,Mugnai]{Dimitri Mugnai}
 \author[E.\,Vecchi]{Eugenio Vecchi}
 
 \address[S.\,Biagi]{Dipartimento di Matematica
 \newline\indent Politecnico di Milano \newline\indent
 Via Bonardi 9, 20133 Milano, Italy}
 \email{stefano.biagi@polimi.it}

 \address[D.\,Mugnai]{Dipartimento di Ecologia e Biologia (DEB)
 \newline\indent Università della Tuscia \newline\indent
 Largo dell'Università, 01100 Viterbo, Italy}
 \email{dimitri.mugnai@unitus.it}
 
 \address[E.\,Vecchi]{Dipartimento di Matematica
 \newline\indent Università degli Studi di Bologna \newline\indent
 Piazza di Porta San Donato 5, 40126 Bologna , Italy}
 \email{eugenio.vecchi2@unibo.it}

\subjclass[2010]
{35A01, 35R11}

\keywords{Operators of mixed order, $p-$sublinear Dirichlet problems, existence and uniqueness
of solutions, strong maximum principle, $L^\infty-$estimate}

\thanks{The authors are members of the {\em Gruppo Nazionale per
l'Analisi Ma\-te\-ma\-ti\-ca, la Probabilit\`a e le loro Applicazioni}
(GNAMPA) of the {\em Istituto Nazionale di Alta Matematica} (INdAM). S. Biagi
has been partially supported by the INdAM-GNAMPA project 
\emph{Metodi topologici per problemi al contorno associati a certe 
classi di equazioni alle derivate parziali}.
D. Mugnai has been supported by the FFABR ``Fondo per il finanziamento delle attivit\`a base di ricerca'' 2017 and by the INdAM-GNAMPA Project {\em Equazioni alle derivate parziali: problemi e mo\-del\-li}.
E. Vecchi has been partially supported
by the INdAM-GNAMPA project 
\emph{Convergenze variazionali per funzionali e operatori dipendenti da campi vettoriali}.}
 \begin{abstract}
  In this paper we provide necessary and sufficient conditions
  for the existence of a unique positive weak
  solution for some sublinear Dirichlet problems 
  dri\-ven by the sum of a quasilinear local and a nonlocal operator, i.e., 
  $$\LL_{p,s} = -\Delta_p + (-\Delta)^s_p.$$
  Our main result is resemblant to the celebrated work 
  by Brezis--Oswald \cite{BO}. In addition, we prove a regularity result of independent interest.
 \end{abstract}
 \maketitle 
\section{Introduction}\label{sec.Intro}
In this paper we are concerned with quasilinear problems driven 
by the sum of a local and a nonlocal operator. More precisely, the leading operator is 
\[
\LL_{p,s} u := -\Delta_p u +(-\Delta)_p^s u.
\]
Here, $\Delta_p u = \mathrm{div}(|\nabla u|^{p-2}\,\nabla u)$ is the classical $p-$Laplacian operator and, 
for fixed $s\in (0,1)$ and up to a multiplicative positive constant, the fractional $p-$Laplacian is 
defined as
  $$(-\Delta)_p^s u(x) := 2\,\mbox{P.V.}\int_{\R^n}
  \frac{|u(x)-u(y)|^{p-2}(u(x)-u(y))}{|x-y|^{n+ps}}\, dy,$$
where P.V. denotes the Cauchy principal vale, namely
\[
\begin{aligned}
&\mbox{P.V.}\int_{\R^n}
  \frac{|u(x)-u(y)|^{p-2}(u(x)-u(y))}{|x-y|^{n+ps}}\, dy\\
  &=\lim_{\epsilon\to0}\int_{\{y\in \R^n\,:\,|y-x|\geq \epsilon\}}
  \frac{|u(x)-u(y)|^{p-2}(u(x)-u(y))}{|x-y|^{n+ps}}\, dy.
\end{aligned}
\]
Problems driven by operators like $\LL_{p,s}$ have raised a certain interest in the last few years, both 
for the mathematical complications that the combination of two so different operators imply and for the 
wide range of applications, see for instance \cite{BDVV, BDVVAcc, BDVVperp, BSM, SilvaSalort, DPLV1, DPLV2} 
and the references therein. A common feature of the a\-fo\-re\-men\-tio\-ned 
papers is to deal with {\it weak 
solutions}, in contrast with other results existing in the literature where viscosity solutions have been 
considered, see e.g. \cite{Barles1, Barles2}.

The purpose of this paper is to prove an existence and uniqueness result in the spirit of the celebrated 
paper by Brezis-Oswald for the Laplacian, see \cite{BO}. So, let us consider the Dirichlet problem
 \begin{equation} \label{eq.pbDirSec2}
   \begin{cases} 
 -\Delta_p u +(-\Delta)_p^s u = f(x,u) & \text{in $\Omega$}, \\
    u \gneqq 0 & \text{in $\Omega$}, \\
    u \equiv 0 & \text{in $\R^n\setminus \Omega$}.
   \end{cases}
  \end{equation}
Here $\Omega$ is a bounded open set with $C^1$-smooth boundary.
Under standard assumptions on $f$, we show that if $u$ solves \eqref{eq.pbDirSec2} (in some sense to be 
made precise later on), then $u>0$ in $\Omega$, and we give precise conditions under which such a solution 
exists and is unique. For this, as in \cite{BO} for the local case with $p=2$, a crucial role is played by 
the monotonicity of the map
\[
t\mapsto \frac{f(x,t)}{t^{p-1}}.
\]
Indeed, in \cite{BO} the authors considered the problem
\begin{equation}\label{Lapla}
\begin{cases}
-\Delta u=f(x,u) & \mbox{ in }\Omega,\\
u\gneqq 0, & \mbox{ in }\Omega \\
u=0 &\mbox{ on }\partial \Omega.
\end{cases}
\end{equation}
 where $f:\Omega\times [0,\infty)\to \mathbb{R}$ satisfies suitable growth assumptions, and
 the map
 \[
t\mapsto \frac{f(x,t)}{t}\]
 is decreasing in $(0,\infty)$. Under these conditions, 
 the authors showed that \eqref{Lapla} has at most one solution and 
 that such a solution exists if and only if
\begin{equation}\label{autov0}
\lambda_1(-\Delta -\tilde{a}_0(x))<0
\end{equation}
and
\begin{equation}\label{autovinfty}
\lambda_1(-\Delta -\tilde{a}_\infty(x))>0,
\end{equation}
where $\lambda_1(-\Delta -a(x))$  denotes the first eigenvalue of $-\Delta -a(x)$ with zero Dirichlet 
condition and
\begin{equation}\label{defABrezis}
\tilde{a}_0(x):=\lim_{u\downarrow 0}\frac{f(x,u)}{u}  
\quad \mbox{ and }\quad \tilde{a}_\infty(x):=\lim_{u\uparrow \infty}\frac{f(x,u)}{u}.
\end{equation}
As already mentioned, in this paper we want to prove an analogous result in the quasilinear case given by 
problem \eqref{eq.pbDirSec2}, where $f$ satisfies the following conditions:
  \begin{itemize}
   \item[(f1)] $f:\Omega\times [0,+\infty)\to \mathbb{R}$ is a Carath\'eodory function.
   \item[(f2)]    $f(\cdot,t)\in L^{\infty}(\Omega)$ for every $t\geq 0$.
   \item[(f3)] There exists a constant $c_p > 0$ such that
  \begin{equation*}
	|f(x,t)|\leq c_p(1+t^{p-1})\qquad \text{for a.e.\,$x\in\Omega$ and every $t\geq 0$}.
  \end{equation*}
   \item[(f4)] For a.e.\,$x\in\Omega$, the function 
   $t\mapsto \dfrac{f(x,t)}{t^{p-1}}$ is strictly decreasing in $(0,\infty)$.
 \end{itemize}
We can then consider functions $a_0$ and $a_{\infty}$ akin to those in \eqref{defABrezis}, see 
\eqref{a0ainfty} for the precise definition. Moreover, we denote respectively by 
\begin{equation} \label{eq:lambda1introdef}
 \lambda_1(\mathcal{L}_{p,s}-a_0)\qquad\text{and}\qquad 
 \lambda_1(\mathcal{L}_{p,s}-a_\infty),
\end{equation}
the smallest eigenvalues of $\mathcal{L}_{p,s}-a_0$ and $\mathcal{L}_{p,s}-a_\infty$, both in presence of 
nonlocal Dirichlet boundary condition (i.e. $u=0$ in $\mathbb{R}^n \setminus \Omega$). Since the function 
$a_0$  can be \emph{unbounded}, (notice that, on the other hand, $a_\infty$ is bounded by (f3)),
similarly to Brezis-Oswald \cite{BO},
the precise de\-fi\-ni\-tion of \eqref{eq:lambda1introdef} 
is the following:
\begin{equation} \label{eq:deflambdaBO}
 \begin{split}
  &  \lambda_1(\LL_{p,s}-a_0)
  := \!\!\!\inf_{\begin{subarray}{c}
 u\in \xp \\
 \|u\|_{L^p(\Omega)} = 1
 \end{subarray}}\bigg\{\mathcal{Q}_{p,s}(u)-\int_{\{u\neq 0\}}a_0\,|u|^p\, dx \bigg\};
 \\[0.2cm]
 & \lambda_1(\LL_{p,s}-a_\infty)
  := \!\!\!\inf_{\begin{subarray}{c}
 u\in \xp \\
 \|u\|_{L^p(\Omega)} = 1
 \end{subarray}}\bigg\{\mathcal{Q}_{p,s}(u)-\int_{{\Omega}}a_\infty\,|u|^p\, dx \bigg\},
 \end{split}
\end{equation}
 where $\mathbb{X}_p(\Omega)$ is defined in \eqref{eq.defSpaceXp}, and we have introduced the simplified notation
 \begin{equation} \label{eq:defQform}
  \mathcal{Q}_{p,s}(u) := \int_{\Omega}|\nabla u|^p\, dx
 + \iint_{\R^{2n}}\!\!\!
 \frac{|u(x)-u(y)|^p}{|x-y|^{n+ps}}\, dx\, dy.
 \end{equation}
In order to prove \emph{uniqueness}, we shall add the following additional hypothesis:
 \begin{itemize}
  \item[(f5)] there exists $\rho_f > 0$ such that
  \begin{equation} \label{eq.ineqrhof}
   f(x,t) > 0 \quad\text{for a.e.\,$x\in\Omega$ and every $0<t<\rho_f$}.
  \end{equation}
 \end{itemize}
  We observe that, in the particular case of power-type
  nonli\-ne\-a\-ri\-ties $f(x,u) = u^\theta$ (with $0\leq \theta \leq p-1$), assumption
  (f5) is trivially satisfied.
 \begin{remark} \label{rem:assumptionf5}
 As a matter of fact, assumption (f5) is just a {technical} one
 (far from being optimal) which permits to overcome the lack of 
 \emph{boun\-da\-ry regularity} for $\LL_{p,s}$. In fact,
  since we do not know the $C^{1,\alpha}$-regularity {up to the boundary} 
  of weak solutions
  of \eqref{eq.pbDirSec2}, we do not have at our disposal a Hopf-type lemma
  for $\LL_{p,s}$ and we cannot follow directly the approach by Brezis-Oswald
  to get
  the {uniqueness of solutions} for \eqref{eq.pbDirSec2}. 
  We then need to exploit
  a suitable approximation argument (see Theorem \ref{thm.uniqueness} below), 
  and for this approach assumption (f5) seems
  to be essential.
\end{remark}
We are now ready to state our main result:
\begin{theorem} \label{thm:Main}
Let $\Omega \subset \mathbb{R}^n$ be a bounded open set with $C^1$-smooth boundary $\partial\Omega$. 
Assume that $f$ satisfies \emph{(f1)}--\emph{(f5)}. Then, the following assertions hold.
\begin{itemize}
\item[(1)] There exists a unique positive solution to \eqref{eq.pbDirSec2} if
 $$\lambda_1(\mathcal{L}_{p,s}-a_0)<0<\lambda_1(\mathcal{L}_{p,s}-a_\infty).$$
 Moreover, if a solution to \eqref{eq.pbDirSec2} exists, then it is unique and 
 \begin{equation*}
  \lambda_1(\mathcal{L}_{p,s}-a_0)<0.
  \end{equation*}
\item[(2)] In the linear case $p = 2$, there exists a unique 
positive solution to \eqref{eq.pbDirSec2}
 \emph{if and only if}
 \begin{equation*}
  \lambda_1(\mathcal{L}_{2,s}-a_0)<0<\lambda_1(\mathcal{L}_{2,s}-a_\infty).
  \end{equation*}
\end{itemize}
\end{theorem}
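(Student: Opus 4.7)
The plan is to follow the original Brezis--Oswald blueprint, adapted to the mixed operator $\LL_{p,s}$. For existence in (1), I would apply the direct method of the calculus of variations to the truncated energy
\[
J(u) := \frac{1}{p}\,\mathcal{Q}_{p,s}(u) - \int_\Omega F(x,u^+(x))\,d x,\qquad u\in \xp,
\]
where $F(x,t)=\int_0^t f(x,\tau)\,d\tau$ for $t\ge 0$. The monotonicity (f4) combined with the growth (f3) yields, for every $\varepsilon>0$, a bound $F(x,t)\le \tfrac{1}{p}(a_\infty(x)+\varepsilon)\,t^p+C_\varepsilon$ for all $t\ge 0$, so the hypothesis $\lambda_1(\LL_{p,s}-a_\infty)>0$ translates into coercivity of $J$ on $\xp$; weak lower semicontinuity follows from convexity of $\mathcal{Q}_{p,s}$ together with (f3) and the compact embedding of $\xp$ into $L^p(\Omega)$. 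Thus $J$ attains a minimum $u_0$. The other hypothesis $\lambda_1(\LL_{p,s}-a_0)<0$ forces $u_0\not\equiv 0$: pick $\phi\ge 0$ admissible in \eqref{eq:deflambdaBO} with strictly negative value there, and note that $F(x,t)\sim \tfrac{1}{p}a_0(x)t^p$ as $t\downarrow 0$ gives $J(t\phi)<0$ for $t>0$ small. Testing the resulting Euler--Lagrange equation against $-u_0^-$ forces $u_0^-\equiv 0$, so $u_0\ge 0$ is a weak solution of \eqref{eq.pbDirSec2}.

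For part (3), boundedness follows from the $L^\infty$-estimate announced in the abstract as an independent regularity result (a De Giorgi/Moser iteration tailored to $\LL_{p,s}$), and positivity then comes from a strong maximum principle for the mixed operator applied to the non-trivial, non-negative, bounded weak solution, after absorbing the lower-order term allowed by (f3). Uniqueness is the Brezis--Oswald variational identity driven by a discrete Picone inequality: given two positive solutions $u,v$, a standard approximation away from $\{u=0\}\cup\{v=0\}$ lets one test the equation for $u$ against $v^p/u^{p-1}$ and symmetrically for $v$; the local Picone inequality of Allegretto--Huang handles $-\Delta_p$ while the nonlocal discrete Picone inequality of Brasco--Franzina handles $(-\Delta)_p^s$, and summing the two resulting inequalities produces
\[
0\le \int_\Omega\!\left(\frac{f(x,u)}{u^{p-1}}-\frac{f(x,v)}{v^{p-1}}\right)(u^p-v^p)\,d x,
\]
whose integrand is non-positive pointwise by (f4), forcing $u\equiv v$.

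The necessary condition $\lambda_1(\LL_{p,s}-a_0)<0$ in (1) is obtained by testing the equation for a solution $u$ against $u$ itself, rewriting the right-hand side as $\int (f(x,u)/u^{p-1})\,u^p\,d x$, and invoking the strict inequality $f(x,u)/u^{p-1}<a_0(x)$ on $\{u>0\}$ that descends from (f4); the normalised function $u/\|u\|_{L^p(\Omega)}$ then realises a strictly negative value in \eqref{eq:deflambdaBO}. In the linear case (2), the converse necessity $\lambda_1(\LL_{2,s}-a_\infty)>0$ uses Picone in the opposite direction: for a positive first eigenfunction $\phi_\infty$ of $\LL_{2,s}-a_\infty$, Picone applied to the equation for $u$ tested against $\phi_\infty^2/u$ gives $\mathcal{Q}_{2,s}(\phi_\infty)\ge \int(f(x,u)/u)\,\phi_\infty^2\,d x$, and the strict bound $f(x,u)/u>a_\infty(x)$ on $\{u>0\}$ from (f4) yields $\lambda_1(\LL_{2,s}-a_\infty)\,\|\phi_\infty\|_{L^2(\Omega)}^2>0$. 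Linearity seems essential here because only when $p=2$ does a positive first eigenfunction with the regularity and pointwise identity needed to make $\phi_\infty^2/u$ admissible in both components of $\LL_{2,s}$ come for free from classical linear spectral theory.

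The main obstacle I anticipate is the simultaneous handling of the local and nonlocal Picone inequalities in the uniqueness and necessity arguments. The quotient test functions $v^p/u^{p-1}$ and $\phi_\infty^2/u$ do not \emph{a priori} belong to $\xp$, the nonlocal Picone operates inside a double integral over $\R^{2n}$ with no pointwise chain rule to rely on, and the strict inequality coming from (f4) must survive both the truncation and the passage to the limit. Controlling this approximation, especially near $\partial\Omega$ where the strong maximum principle delivers positivity but not a uniform lower bound, is where I expect most of the technical effort to concentrate.
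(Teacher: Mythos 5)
Your overall architecture matches the paper's (direct method for existence, strong maximum principle and a De Giorgi iteration for positivity and boundedness, Picone for uniqueness and for the necessity of $\lambda_1(\mathcal{L}_{p,s}-a_0)<0$), but two of your steps contain genuine gaps.

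First, your proof of part (2). You propose to take a positive first eigenfunction $\phi_\infty$ of $\mathcal{L}_{2,s}-a_\infty$ and apply Picone with the quotient $\phi_\infty^2/u$. The difficulty is that $a_\infty$ is only bounded from above (by $-c_f$, see \eqref{eq:azerobd}) and can be $-\infty$ on a set of positive measure, or even identically $-\infty$; the eigenvalue problem \eqref{eigen} is well-posed only for $a\in L^\infty(\Omega)$ (Proposition \ref{Proposition3}), so $\phi_\infty$ may simply not exist. The paper avoids this by first proving $u\in L^\infty(\Omega)$ (Theorem \ref{thm:uzeroglobalBd}) and then introducing the \emph{bounded} auxiliary weight $\overline a(x)=f(x,\|u\|_\infty+1)/(\|u\|_\infty+1)$, which satisfies $\overline a\geq a_\infty$ by (f4); the eigenfunction $\psi$ of $\mathcal{L}_{2,s}-\overline a$ exists, can be used directly as a test function in the equation for $u$, and the strict monotonicity of $f$ on $\{u>0\}$ combined with the monotonicity of $\lambda_1$ in the weight gives $\lambda_1(\mathcal{L}_{2,s}-a_\infty)>0$ with no Picone argument at all. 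Your diagnosis of why $p=2$ is essential is also off: Proposition \ref{Proposition3} provides a simple positive first eigenfunction for every $p\in(1,\infty)$, so existence of $\phi_\infty$ is not the obstruction. What genuinely fails for $p\neq2$ is the symmetry of the form — the paper explicitly attributes the restriction to the lack of nonlinear (nonlocal) Green identities, which is what allows one to test the equation for $u$ against $\psi$ and rewrite the result via the equation for $\psi$ in the linear case.

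Second, your uniqueness argument. You sketch the Picone/Brezis--Oswald scheme correctly, but do not identify the obstacle in passing to the limit $\varepsilon\to0^+$ on the set where the solution is small. In the paper (Theorem \ref{thm.uniqueness}), the integrals over $\{u_i<\rho_f\}$ converge by Beppo Levi only because $f(x,t)>0$ there, which is the additional hypothesis (f5) in \eqref{eq.ineqrhof}. This is not cosmetic: without a sign condition the quotient $f(x,u_i)\,r_{i,\varepsilon}$ need not be bounded and the monotone limit is not available. The paper closes the loop by proving (Proposition \ref{Propf5holds}) that (f5) is automatic whenever $\lambda_1(\mathcal{L}_{p,s}-a_0)<0$, which in turn always holds when a solution exists (Lemma \ref{lem:necessity}). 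Without this chain, the uniqueness part of (3) as you've sketched it does not close.
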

Let us now spend a few comments on the proof of Theorem \ref{thm:Main}. Despite the apparent simplicity in 
working with operators like $\mathcal{L}_{p,s}$, we have to face some difficulties related to the scarce 
literature available for such operators.
First of all, 
we need to prove the validity of the {\sl strong maximum principle} as stated in \cite{PS}, namely: 
if $u$ is a \emph{nonnegative} weak solution of
 $\LL_{p,s}u = f(x,u)$ (with zero-boun\-da\-ry conditions), then
\[
\text{either $u\equiv 0$ in $\Omega$ \quad or  \quad $u > 0$ a.e.\,in $\Omega$},
\]
see Theorem \ref{thm.SMPWeak} for the precise statement. We believe that this preliminary result is of 
independent interest, and we stress that Theorem \ref{thm.SMPWeak} cannot be deduced as a corollary of the 
maximum principles proved in \cite{BDVV} nor in \cite{BSM}.

A second delicate point concerns the uniqueness of the solution. Indeed, the lack of even a basic boundary 
regularity for $\mathcal{L}_{p,s}$ prevents from applying the original argument in \cite{BO}. For this 
reason, we have to exploit an approximation argument inspired by the one in \cite{BS}, with the additional 
aid of a further assumption on $f$ (see \eqref{eq.ineqrhof}).

Finally, we emphasize that in order to get a complete characterization of the existence and uniqueness of a po\-si\-ti\-ve weak solution, we must restrict ourselves to the linear case $p=2$, see Proposition \ref{PropAinfPos}, since we cannot prove the inequality $\lambda_1(\mathcal{L}_{p,s}-a_\infty)>0$ in the general case. Indeed, two fundamental tools would be needed to prove this fact: first, an $L^\infty$ bound on solutions, and this is the content of Theorem \ref{thm:uzeroglobalBd}; second, some nonlinear Green identities, used  in \cite{DiazSaa} and in \cite{FMP} for the local case. To the best of our knowledge, the nonlocal counterparts of such identities are still missing in the literature. Nevertheless, we think that the global  boundedness result in Theorem \ref{thm:uzeroglobalBd}, as well as the very recent results in \cite{Kinnunen}, can be a useful tool for further investigations in the general case $p\neq 2$. 

We conclude by noticing that, in the purely nonlocal case, the complete characterization is possible for any $p$ since the precise behaviour of the solutions at the boundary is known, see \cite{im}.
Actually, even if an analogous result
is not known in our mixed context, after the submission of this paper we found a way to bypass
both the absence of appropriate nonlinear Green identities and the lack of
boundary regularity for $\LL_{p,s}$; thus, we can go full circle
and obtain a complete characterization of the (unique) solvability of
\eqref{eq.pbDirSec2} for $p\neq 2$ as well, see \cite{BMV}.
\medskip

We close this introduction with a plan of the paper: in Section \ref{sec.NotPrel} we introduce the relevant 
notation and we list the standing assumptions needed in the rest of the paper. Then, in Section \ref{sec:SMP} 
we prove the strong maximum principle for weak solutions of problem \eqref{eq.pbDirSec2}. Uniqueness and boundedness
of positive solutions is proved in Section \ref{sec.uniqueness}.
In order to prove conditions analogous to those established in \eqref{autov0} and \eqref{autovinfty}, in 
Section \ref{sec.Eigenvalue} we shall study the eigenvalue problem associated to $\LL_{p,s}$ in presence of 
a bounded and indefinite weight. In fact, although the analogue of the functions defined in 
\eqref{defABrezis} could be unbounded, for the existence-uniqueness result we will reduce to study an 
eigenvalue problem in presence of a bounded weight, see Proposition \ref{PropAinfPos}. Finally, existence is proved in Section
\ref{sec.Existence}.

\medskip
\textbf{Acknowledgements} We thank the anonymous referee for his/her careful reading of the manuscript.

\section{Notation and preliminary results} \label{sec.NotPrel}
  In this first section, we introduce the main assumptions and notation
  which shall be used throughout the rest of the paper. Moreover,
  we state and prove some auxiliary results which shall be exploited
  in the next sections.
  \medskip
  
  To begin with, we fix $p\in(1,+\infty)$ and we let $\Omega\subseteq\R^n$
  be a connected and bounded open set with $C^1$-smooth
  boundary $\de\Omega$.
  Accordingly, we define
  \begin{equation} \label{eq.defSpaceXp}
   \mathbb{X}_p(\Omega) := \big\{u\in W^{1,p}(\R^n):\,
   \text{$u\equiv 0$ a.e.\,on $\R^n\setminus\Omega$}\big\}.
  \end{equation}
  In view of the regularity assumption on $\de\Omega$, it is well-known that (see e.g. \cite[Proposition 9.18]{Brezis})
  $\mathbb{X}_p(\Omega)$ can be identified
  with the space $W_0^{1,p}(\Omega)$: more precisely, we have
  \begin{equation} \label{eq.identifXWzero}
   u \in W_0^{1,p}(\Omega)\,\,\Longleftrightarrow\,\,
  u\cdot\mathbf{1}_{\Omega}\in\mathbb{X}_p(\Omega),
  \end{equation}
  where $\mathbf{1}_\Omega$ is the indicator function of $\Omega$.
  From now on, we shall \emph{tacitly i\-den\-ti\-fy}
  a function $u\in W_0^{1,p}(\Omega)$ with its `zero-extension' 
  $\hat{u} := u\cdot\mathbf{1}_\Omega\in\mathbb{X}_p(\Omega)$.
  \vspace*{0.1cm}
  
By the Poincar\'e inequality and \eqref{eq.identifXWzero}, we get that the quantity
  $$\|u\|_{\mathbb{X}_p} :=\left( \int_{\Omega}|\nabla u|^p\, dx\right)^{1/p},
  \qquad u\in\mathbb{X}_p(\Omega),$$ 
  endows $\mathbb{X}_p(\Omega)$ with a structure of (real) Banach space, which is
  actually isometric to $W_0^{1,p}(\Omega)$. In par\-ti\-cu\-lar, the following 
  properties hold true:
  \medskip
  \begin{enumerate}
   \item $\mathbb{X}_p(\Omega)$ is separable and reflexive (since $p > 1$);
   \vspace*{0.05cm}
   
   \item $C_0^\infty(\Omega)$ is dense in $\mathbb{X}_p(\Omega)$.
  \end{enumerate}
  Due to its relevance in the sequel, we also introduce an \emph{ad-hoc}
  notation for the (con\-vex) cone
  of the nonnegative functions in $\mathbb{X}_p(\Omega)$:
  \[
   \mathbb{X}_p^+(\Omega) := \big\{u\in\mathbb{X}_p(\Omega):\,\text{$u\geq 0$ a.e.\,in $\Omega$}\big\}.
  \]
  As anticipated in the Introduction, the aim of this paper is to provide
  necessary and sufficient conditions for solving the Dirichlet problem \eqref{eq.pbDirSec2}. 
 \medskip
 
First of all, we give the definition of ``solution" for \eqref{eq.pbDirSec2}.
 \begin{definition} \label{def.WeakSol}
 Let the above assumptions and notation be in force. We say that a function
 $u\in\mathbb{X}_p(\Omega)$ is a \emph{weak solution} of 
 \eqref{eq.pbDirSec2} if
 \begin{enumerate}
  \item for every function $\varphi\in\mathbb{X}_p(\Omega)$ one has
  \begin{equation} \label{eq.defWeakSol}
  \begin{split}
    & \int_{\Omega}
   |\nabla u|^{p-2}\langle \nabla u,\nabla \varphi\rangle\, dx \\
   & \qquad\qquad + \iint_{\R^{2n}}
   \!\!\!\frac{|u(x)-u(y)|^{p-2}(u(x)-u(y))(\varphi(x)-\varphi(y))}{|x-y|^{n+ps}}\, dx\, dy
   \\
   &   = \int_{\Omega}f(x,u)\varphi\, dx;
   \end{split}
  \end{equation}
  \item $u\geq 0$ a.e.\,in $\Omega$ and $|\{x\in\Omega:\,u(x) > 0\}| > 0$, $|A|$ 
  denoting the Lebesgue measure of the set $A$.
 \end{enumerate}
 \end{definition}
 \begin{remark} \label{rem.BuonaDef}
 A couple of remarks on Definition \ref{def.WeakSol} are in order. 
 \medskip
 
 \noindent(1)\,\,We explicitly notice that the definition above is well-posed. 
  Indeed, we know from \cite[Proposition 2.2]{guida}  that there exists $c_{n,s,p}>0$ such that 
 $$
 \bigg(\iint_{\R^{2n}}
   \!\!\!\frac{|u(x)-u(y)|^{p}}{|x-y|^{n+ps}}\, dx\, dy\bigg)^{1/p}
   \leq c_{n,s,p}\|f\|_{W^{1,p}(\R^n)}
  \quad\text{$\forall\,\,f\in W^{1,p}(\R^n)$}.
$$
 Thus, by using H\"older's inequality, we find that  if $u, \varphi\in \mathbb{X}_p(\Omega)$, then
 \begin{align*}
   & 
   \iint_{\R^{2n}}
   \!\!\!\frac{|u(x)-u(y)|^{p-1}|\varphi(x)-\varphi(y)|}{|x-y|^{n+ps}}\, dx\, dy
   \\
   & \quad
   \leq 
   \bigg(\iint_{\R^{2n}}
   \!\!\!\frac{|u(x)-u(y)|^{p}}{|x-y|^{n+ps}}\, dx\, dy\bigg)^{1-1/p}
   \bigg(\iint_{\R^{2n}}
   \!\!\!\frac{|\varphi(x)-\varphi(y)|^{p}}{|x-y|^{n+ps}}\, dx\, dy\bigg)^{1/p} \\[0.15cm]
   & \quad \leq c_{n,s,p}^2\|u\|_{W^{1,p}(\R^n)}^{p-1}\cdot\|\varphi\|_{W^{1,p}(\R^n)} < +\infty.
  \end{align*}
 On the other hand, since $u\in\mathbb{X}_p(\Omega)\subseteq L^p(\Omega)$, 
 by exploiting (f3), we have
  $$\int_\Omega|f(x,u)\,\varphi|\, dx
  \leq c_p\left(\int_\Omega|\varphi| dx+\int_\Omega|u|^{p-1}|\varphi|\, dx\right)
  <+\infty.$$
  
  \noindent(2)\,\,It is clear from (1) that the notion of solution similarly holds if we replace
  the growth assumption in (f3) with the more general one
 \begin{equation}\label{cresc}
   |f(x,t)|\leq c_p(1+t^{q-1})\qquad \text{for a.e.\,$x\in\Omega$ and every $t\geq 0$},
 \end{equation}
  where $q\in \left[p,\frac{pn}{n-p}\right]$ if $p<n$, or $q\in [p,+\infty)$ if $p\geq n$.
\end{remark}
 
We conclude this section with some consequences of as\-sump\-tions (f1)--(f4) which shall be useful
 in the sequel (see, e.g., \cite{FMP} for related remarks).  First, taking into account assumption (f4), we introduce the functions
  \begin{equation}\label{a0ainfty}
   a_0 (x) := \lim_{t\to 0^{+}} \dfrac{f(x,t)}{t^{p-1}}
   \qquad\text{and}\qquad
   a_{\infty} (x) := \lim_{t\to +\infty} \dfrac{f(x,t)}{t^{p-1}},
  \end{equation}
 noticing that the first one is allowed to be identically equal to $+\infty$.

Then, we notice that: 
 \begin{enumerate}
  \item by combining (f2) and (f4), we get that
  \begin{equation}\label{Defcf}
 \frac{f(x,t)}{t^{p-1}} \geq f(x,1) \geq - \|f(\cdot,1)\|_{L^{\infty}(\Omega)} =: -c_f > -\infty,
 \end{equation} 
 for a.e.\,$x \in \Omega$ and every $t \in (0,1]$. In particular, from (f1) and \eqref{Defcf} we get
 \begin{equation}\label{Segnof}
 \text{$f(x,0)\geq 0$ for a.e.\,$x\in\Omega$}.
\end{equation}

 \item Using again assumption (f4), we have
 \begin{equation*}
  a_0(x)\geq \frac{f(x,t)}{t^{p-1}} \geq a_\infty(x) 
 \end{equation*}
 for a.e.\,$x \in \Omega$ and every $t>0$. In particular, by \eqref{Defcf} we get
 \begin{equation} \label{eq:azerobd}
  a_0(x)\geq -c_f\geq a_\infty(x)\quad\text{for a.e.\,$x\in\Omega$}.
 \end{equation}
 \end{enumerate}

\section{Strong maximum principle}\label{sec:SMP}

While dealing with \emph{nonnegative} weak solutions $u$ of  $\LL_{p,s}u = f(x,u)$ (with zero-boun\-da\-ry conditions),
 it should be desirable to know that either
\[
\text{$u\equiv 0$ in $\Omega$ \qquad or  \qquad $u > 0$ a.e.\,in $\Omega$},
\]
 namely, that a {\it strong maximum principle} holds.
 
The next theorem shows that this is indeed true in our context.
 \begin{theorem} \label{thm.SMPWeak}
Let $f$ satisfy {\rm (f1)--(f3)} and let $u\in\mathbb{X}^+_p(\Omega)$ satisfy
  identity \eqref{eq.defWeakSol} for every function $\varphi\in \mathbb{X}_p(\Omega)$.
  Then, either $u\equiv 0$ or $u > 0$ almost everywhere in $\Omega$. 
 \end{theorem}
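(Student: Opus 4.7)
I argue by contradiction: suppose $u\not\equiv 0$ and that $Z:=\{x\in\Omega:u(x)=0\}$ has positive Lebesgue measure, so that also $|\{u>0\}|>0$. The guiding heuristic is that testing \eqref{eq.defWeakSol} against the (non-admissible) function $\mathbf{1}_Z$ would make the local part formally vanish (since $\nabla u=0$ a.e.\,on $\{u=0\}$), give a nonnegative right-hand side $\int_Z f(x,0)\,dx\geq 0$ via \eqref{Segnof}, but produce the strictly negative nonlocal contribution
\[
-2\iint_{Z\times\{u>0\}}\frac{u(y)^{p-1}}{|x-y|^{n+ps}}\,dx\,dy<0,
\]
immediately yielding a contradiction. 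Since $\mathbf{1}_Z\notin\mathbb{X}_p(\Omega)$, the work consists in realizing this heuristic rigorously.

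To this end, I fix a nonnegative cutoff $\eta\in C_c^\infty(\Omega)$ with $\int_\Omega \eta\,\mathbf{1}_Z\,dx>0$ (possible because a.e.\,density point of $Z$ lies in the interior of $\Omega$), and test \eqref{eq.defWeakSol} with the admissible approximation $\varphi_\delta:=\eta(1-u/\delta)_+\in\mathbb{X}_p(\Omega)$, which converges pointwise to $\eta\mathbf{1}_Z$ as $\delta\to 0^+$. The local part of the weak formulation splits as
\[
\int_{\{u<\delta\}}|\nabla u|^{p-2}\nabla u\cdot\nabla\eta\,(1-u/\delta)\,dx \;-\; \frac{1}{\delta}\int_{\{u<\delta\}}\eta|\nabla u|^p\,dx,
\]
whose first summand vanishes in the limit by dominated convergence (using $\nabla u=0$ a.e.\,on $\{u=0\}$) and whose second summand is $\leq 0$; hence $\limsup_{\delta\to 0^+}(\text{local part})\leq 0$. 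The right-hand side $\int f(x,u)\varphi_\delta\,dx$ converges via bounded convergence and \eqref{eq.growthf} to $\int_Z f(x,0)\eta\,dx\geq 0$, again by \eqref{Segnof}. Finally, the nonlocal part tends to $-2\iint_{Z\times\{u>0\}}\eta(x)u(y)^{p-1}/|x-y|^{n+ps}\,dx\,dy<0$ by our choice of $\eta$; summing these three limits contradicts the equality in the weak formulation.

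The main obstacle is the rigorous passage to the limit in the nonlocal term, whose integrand is signed and admits no obvious globally integrable majorant. I plan to split the domain of integration according to whether at least one of $u(x),u(y)$ is $\geq\delta$, in which case the monotonicity of $t\mapsto(1-t/\delta)_+$ forces the integrand to be pointwise nonpositive and Fatou's lemma yields the correct one-sided bound, or both are $<\delta$, where the sign is indeterminate. The latter ``diagonal'' region I would control via the algebraic identity
\[
\eta(x)\phi_\delta(x) - \eta(y)\phi_\delta(y) = \tfrac12\bigl(\eta(x)+\eta(y)\bigr)\bigl(\phi_\delta(x)-\phi_\delta(y)\bigr) + \tfrac12\bigl(\eta(x)-\eta(y)\bigr)\bigl(\phi_\delta(x)+\phi_\delta(y)\bigr),
\]
with $\phi_\delta:=(1-u/\delta)_+$: the first summand produces a sign-definite (nonpositive) integrand thanks to monotonicity, while the second summand is estimated using $|\eta(x)-\eta(y)|\leq\|\nabla\eta\|_\infty|x-y|$ together with the regularity $u\in W^{1,p}(\R^n)\hookrightarrow W^{s,p}(\R^n)$, which makes it negligible as $\delta\to 0^+$.
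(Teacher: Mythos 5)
Your approach is correct in its main lines but genuinely different from the paper's, so let me compare the two. The paper proceeds in two steps: first it uses the logarithmic test function $\varphi^p/(u+\varepsilon)^{p-1}$ (in the spirit of De Giorgi--Moser / Di Castro--Kuusi--Palatucci) together with a mean-zero Poincar\'e inequality on a ball containing a positive-measure piece of $\{u=0\}$ to conclude that $u\equiv 0$ on an entire ball $B(x_0,R)\Subset\Omega$; only then does it invoke the nonlocal term, testing with a smooth $\varphi\ge 0$ supported in $B(x_0,R)$, which cleanly reduces the nonlocal contribution to $-2\int_{\Omega\setminus B}\int_B u(x)^{p-1}\varphi(y)/|x-y|^{n+ps}$ and forces $u\equiv 0$ on all of $\Omega$. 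Your argument skips the localization step entirely and tests once and for all against the one-parameter family $\eta(1-u/\delta)_+\to\eta\mathbf{1}_Z$, absorbing all the work into a single limit passage in the nonlocal term. This is more economical (no logarithmic estimate, no Poincar\'e on a subdomain) but trades that for a more delicate nonlocal limit, which the paper carefully sidesteps by first killing $u$ on a ball, after which the cross term $B\times(\Omega\setminus B)$ has no singularity at the interface.

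Two small remarks on filling in your sketch. First, the claimed \emph{convergence} of the nonlocal part to $-2\iint_{Z\times\{u>0\}}\eta(x)u(y)^{p-1}/|x-y|^{n+ps}$ is more than you need and possibly more than you get: the sign-definite piece of the diagonal region, namely
\[
-\frac{1}{2\delta}\iint_{\{u(x)<\delta,\,u(y)<\delta\}}\frac{|u(x)-u(y)|^p\big(\eta(x)+\eta(y)\big)}{|x-y|^{n+ps}}\,dx\,dy,
\]
need not tend to $0$, but being $\le 0$ it only helps, so the correct statement is $\limsup_{\delta\to 0^+}(\text{nonlocal})\le -c<0$ via Fatou on the off-diagonal piece, which suffices for the contradiction. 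Second, the Lipschitz bound $|\eta(x)-\eta(y)|\le\|\nabla\eta\|_\infty|x-y|$ alone does not give an $L^1$ dominating function for the correction term over all of $\mathbb{R}^{2n}$: for $|x-y|\ge 1$ the resulting kernel $|x-y|^{-(n+ps-1)}$ is not integrable when $ps\le 1$. You must combine it with $|\eta(x)-\eta(y)|\le 2\|\eta\|_\infty$ and the compact support of $\eta$ and $u$ to handle $|x-y|$ large, and then use H\"older with $u\in W^{s,p}$ (as you indicate) on the near-diagonal region. With these two adjustments the plan closes.
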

\begin{remark}
Actually, as it will be clear from the proof, we prove a logarithmic inequality - inequality 
\eqref{eq.estimFinalSMP} - which implies, when $u$ is not the trivial function, that 
the set $\{x\in \Omega\,:\,u(x)=0\}$ has zero $W^{1,p}-$capacity, as in \cite[Theorem 2.4]{LuPra}.

\end{remark}
 \begin{proof}[Proof of Theorem $\ref{thm.SMPWeak}$]
  We suppose that there exists a set $\mathcal{Z}\subseteq\Omega$, with positive
  Lebesgue measure, such that $u\equiv 0$ a.e.\,on $\mathcal{Z}$.
  Then, we claim that
  \begin{equation} \label{eq.claimB}
   \text{$\exists\,\,x_0\in\Omega,\,R > 0$ such that $u\equiv 0$ a.e.\,on $B(x_0,R)\Subset\Omega$}.
  \end{equation}
  Taking this claim for granted for a moment, 
  we now choose a nonne\-ga\-ti\-ve fun\-ction $\varphi\in C_0^\infty(\Omega)$ satisfying
  the properties
  $$\mathrm{supp}(\varphi)\subseteq B(x_0,R)\qquad
  \text{and}\qquad\int_{B(x_0,R)}\varphi\, dx = 1.$$
  Using $\varphi$ as a test function in \eqref{eq.defWeakSol},
  from \eqref{eq.claimB} we get
  \begin{equation} \label{eq.estimconf}
  \begin{split}
   & \int_{B(x_0,R)}f(x,0)\varphi\, dx
   = \int_{\Omega}f(x,u)\varphi\, dx
   \\
   & \qquad = \iint_{\R^{2n}}
   \!\!\!\frac{|u(x)-u(y)|^{p-2}(u(x)-u(y))(\varphi(x)-\varphi(y))}{|x-y|^{n+ps}}\, dx\, dy
   \\
   &  
   \qquad = -2\int_{
   \Omega\setminus B(x_0,R)}\int_{B(x_0,R)}
   \frac{u(x)^{p-1}\varphi(y)}{|x-y|^{n+ps}}\, dx\, dy \\
   & \qquad \leq -\frac{2}{\mathrm{diam}(\Omega)^{n+ps}}
   \int_{\Omega\setminus B(x_0,R)}u(x)^{p-1}\, dx.
  \end{split}
  \end{equation}
  On the other hand, since $\varphi$ is nonnegative, by 
  \eqref{Segnof} we have
  \begin{equation} \label{eq.signintf}
   \int_{B(x_0,R)}f(x,0)\varphi\, dx\geq 0.   
  \end{equation}
  Gathering \eqref{eq.estimconf} and \eqref{eq.signintf}, we obtain
  $$\int_{\Omega\setminus B(x_0,R)}u(x)^{p-1}\, dx = 0$$
  and thus $u\equiv 0$ a.e.\,on $\Omega\setminus B(x_0,R)$ (remind that, by assumption,
  $u\in\mathbb{X}^+_p(\Omega)$). O\-wing to \eqref{eq.claimB},
  we then conclude that $u\equiv 0$ a.e.\,in $\Omega$, as desired.
  \vspace*{0.05cm}
  
  To complete the proof, we are left to 
  show the claim \eqref{eq.claimB}. 
  First of all, since we are assuming that 
  $\mathcal{Z}\subseteq\Omega$ has positive Lebesgue measure,
  it is possible to find a point $x_0\in\Omega$ and some $R > 0$ such that
  $$B(x_0,2R)\Subset \Omega\qquad\text{and}\qquad
  |\mathcal{Z}\cap B(x_0,R)| > 0.$$
  Moreover, we choose a nonnegative function $\varphi\in C_0^\infty(\Omega)$ 
  such that $\varphi\equiv 1$ a.e.\,in $B(x_0,R)$ and $\mathrm{supp}(\varphi)\subseteq 
  B(x_0,2R)$. For every fixed $\varepsilon > 0$, we then set
  $$\varphi_\varepsilon := \frac{\varphi^p}{(u+\varepsilon)^{p-1}}.$$
  Since $\varphi\in C_0^\infty(\Omega)$ and
  $u\in\mathbb{X}_p^+(\Omega)$, it is easy to recognize that
  $\varphi_\varepsilon \in \mathbb{X}_p(\Omega)$ (see, e.g., \cite[Lem.\,2.3]{MPV}); 
  we can then use $\varphi_\varepsilon$ as a test function
  in \eqref{eq.defWeakSol}, obtaining
  \begin{equation} \label{eq.toestimSMP}
  \begin{split}
   & (p-1)\int_{\Omega}\frac{|\nabla u|^{p}}{(u+\varepsilon)^{p}}\varphi^p\, dx
   \\
   & \qquad\leq \iint_{\R^{2n}}
   \frac{|u(x)-u(y)|^{p-2}(u(x)-u(y))
   (\varphi_\varepsilon(x)-\varphi_\varepsilon(y))}{|x-y|^{n+ps}}\, dx\, dy
   \\
   & \qquad\quad
   + p\int_{\Omega}\frac{|\nabla u|^{p-1}|\nabla\varphi|}{(u+\varepsilon)^{p-1}}
   \varphi^{p-1}\, dx - \int_{\Omega}f(x,u)\,\frac{\varphi^p}{(u+\varepsilon)^{p-1}}\, dx.
  \end{split}
  \end{equation}
  We now turn to provide \emph{ad-hoc} estimates
  for the three integrals in the right-hand side of \eqref{eq.toestimSMP}.
  First of all, in the proof of \cite[Lem.\,1.3]{CKP} it is showed that
 \begin{align*}
&\frac{|u(x)-u(y)|^{p-2}(u(x)-u(y)) (\varphi_\varepsilon(x)-\varphi_\varepsilon(y))}{|x-y|^{n+ps}}\\
&\qquad \leq -K\frac{1}{|x-y|^{n+ps}}\left| \log\left( \frac{u(x)+\varepsilon}{u(y)+\varepsilon}\right)\right|^p\varphi^p(y)+K\frac{|\varphi(x)-\varphi(y)|^p}{|x-y|^{n+ps}}
\end{align*}
for some positive constant $K = K_p > 0$.
Hence, by integrating we find
  \begin{equation} \label{eq.estimNonlocalSMP}
  \begin{split}
   & \iint_{\R^{2n}}\frac{|u(x)-u(y)|^{p-2}(u(x)-u(y))
   (\varphi_\varepsilon(x)-\varphi_\varepsilon(y))}{|x-y|^{n+ps}}\, dx\, dy
   \\
   & \qquad\quad \leq K\iint_{\R^{2n}}\frac{|\varphi(x)-\varphi(y)|^p}{|x-y|^{n+ps}} dx\, dy.
   \end{split}
  \end{equation}
 Moreover, using the weighted Young inequality, for every $\varepsilon > 0$ one has
  \begin{equation} \label{eq.YoungSMP}
   \begin{split}
    & p\int_{\Omega}\frac{|\nabla u|^{p-1}|\nabla\varphi|}{(u+\varepsilon)^{p-1}}
   \varphi^{p-1}\, dx \\
   & \qquad\quad
   \leq \frac{p-1}{2}\int_{\Omega}\frac{|\nabla u|^{p}}{(u+\varepsilon)^{p}}\varphi^p\, dx
  + 2^{p-1}\int_{\Omega}|\nabla \varphi|^p\, dx.
   \end{split}
  \end{equation}
  As for the remaining integral, 
  we proceed essentially as in \cite[Lem.\,2.4]{MPV}:
  by ex\-ploi\-ting \eqref{Defcf}
  and \eqref{Segnof}, we have
  the following chain of inequalities:
  \begin{equation} \label{eq.lastMPV}
   \begin{split}
    & - \int_{\Omega}f(x,u)\,\frac{\varphi^p}{(u+\varepsilon)^{p-1}}\, dx
    =
    - \int_{\Omega\cap\{u = 0\}}f(x,0)\,\frac{\varphi^p}{\varepsilon^{p-1}}\, dx
    \\
    & \qquad\qquad\quad
    - \int_{\Omega\cap\{0<u<1\}}\frac{f(x,u)\,\varphi^p}{(u+\varepsilon)^{p-1}}\, dx
    - \int_{\Omega\cap\{u\geq 1\}}\frac{f(x,u)\,\varphi^p}{(u+\varepsilon)^{p-1}}\, dx
    \\
    & \qquad \leq 
    c_f\int_{\Omega\cap\{0<u<1\}}
    \frac{u^{p-1}}{(u+\varepsilon)^{p-1}}\,\varphi^p\, dx
    +c_p\int_{\Omega\cap\{u\geq 1\}}
    \frac{1+u^{p-1}}{(u+\varepsilon)^{p-1}}\,\varphi^p\, dx \\[0.2cm]
    & \qquad
    \leq (c_f+2c_p)\|\varphi\|^p_{L^p(\Omega)}.
   \end{split}
  \end{equation}
  Gathering \eqref{eq.toestimSMP}--\eqref{eq.lastMPV}, 
  we then obtain
  \begin{equation} \label{eq.estimFinalSMP}
   \begin{split}
    & \int_{B(x_0,R)}\Big|\nabla \log\Big(1+\frac{u}{\varepsilon}\Big)\Big|^p\, dx
    = \int_{B(x_0,R)}\frac{|\nabla u|^{p}}{(u+\varepsilon)^{p}}\, dx \\
    & \qquad\qquad
    \leq 
    \int_{B(x_0,R)}\frac{|\nabla u|^{p}}{(u+\varepsilon)^{p}}\,\varphi^p\, dx
    \leq \kappa,
   \end{split}
  \end{equation}
  where $\kappa = \kappa_\varphi > 0$ is a suitable constant
  \emph{independent of $\varepsilon$}.

 With \eqref{eq.estimFinalSMP} at hand, we are finally ready to prove
  \eqref{eq.claimB}. In fact, recalling that 
  $$E := \mathcal{Z}\cap B(x_0,R)$$ 
  has positive Lebesgue measure and $u\equiv 0$ a.e.\,in $E$, 
  by 
  Chebyshev's inequality and the Poincar\'{e} inequality in \cite[Theorem 13.27]{Leoni}
  for every $t>0$ we have
  \begin{equation} \label{eq.afterPoincCheb}
  \begin{split}
   &
   \Big|\log\Big(1+\frac{t}{\varepsilon}\Big)\Big|^p\cdot
   \big|\{u\geq t\}\cap B(x_0,R)\big| 
    \leq \int_{B(x_0,R)}
    \Big|\log\Big(1+\frac{u}{\varepsilon}\Big)\Big|^p\, dx \\ 
    & \qquad
    = 
    \int_{B(x_0,R)}
    \Big|\log\Big(1+\frac{u}{\varepsilon}\Big)
    - m_E\Big|^p\, dx \\
   & \qquad \leq C_P\,
   \int_{B(x_0,R)}\Big|\nabla \log\Big(1+\frac{u}{\varepsilon}\Big)\Big|^p\, dx
   \leq \kappa'.
   \end{split}
  \end{equation}
  where $m_E$ is the mean of $v:=\log(1+u/\varepsilon)\in W^{1,p}(\R^n)$ 
  on the set $E$, that is,
  $$m_E := \frac{1}{|E|}\int_E\log\Big(1+\frac{u}{\varepsilon}\Big)\, dx = 0.$$
  As a consequence, since identity 
  \eqref{eq.afterPoincCheb} holds for every $\varepsilon > 0$ and the constant
  $\kappa'$ is \emph{independent of $\varepsilon$}, we readily infer that
  $$\big|\{u\geq t\}\cap B(x_0,R)\big| =0\qquad\text{for every $t > 0$}.$$
  This obviously implies that $u\equiv 0$ a.e.\,in $B(x_0,R)$, and the proof is complete.
 \end{proof}
 From Theorem \ref{thm.SMPWeak}, we immediately deduce the following result.
 \begin{corollary} \label{cor.posSol}
Let $f$ satisfy {\rm (f1)--(f3)} and let $u\in\mathbb{X}_p(\Omega)$ be a weak solution of \eqref{eq.pbDirSec2}.
  Then, 
  $$\text{$u > 0$ a.e.\,in $\Omega$}.$$
 \end{corollary}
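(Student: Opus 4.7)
The plan is to derive the corollary as a direct consequence of the strong maximum principle just established in Theorem \ref{thm.SMPWeak}, using only the definitional requirements on a weak solution of \eqref{eq.pbDirSec2}.

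First, I would unpack what it means that $u \in \mathbb{X}_p(\Omega)$ is a weak solution in the sense of Definition \ref{def.WeakSol}. By item (2) of that definition, $u \geq 0$ almost everywhere in $\Omega$, and since $u$ vanishes almost everywhere outside $\Omega$ by membership in $\mathbb{X}_p(\Omega)$, we have $u \in \mathbb{X}_p^+(\Omega)$. By item (1) of the same definition, $u$ satisfies the variational identity \eqref{eq.defWeakSol} for every test function $\varphi \in \mathbb{X}_p(\Omega)$. These are exactly the hypotheses of Theorem \ref{thm.SMPWeak}, which therefore applies and yields the dichotomy: either $u \equiv 0$ in $\Omega$, or $u > 0$ almost everywhere in $\Omega$.

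To conclude, I would rule out the trivial alternative by invoking the positivity condition built into the definition of a weak solution. Indeed, item (2) of Definition \ref{def.WeakSol} requires $|\{x \in \Omega : u(x) > 0\}| > 0$, which is plainly incompatible with $u \equiv 0$ a.e. in $\Omega$. Hence the only remaining option from the dichotomy is $u > 0$ almost everywhere in $\Omega$, proving the claim.

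There is no real obstacle here: all the analytic work (the logarithmic estimate, the Poincar\'{e}--Chebyshev argument, the handling of the nonlinearity via \eqref{Defcf}--\eqref{Segnof}) has already been carried out inside the proof of Theorem \ref{thm.SMPWeak}. The corollary is therefore a bookkeeping step that matches the two conditions of Definition \ref{def.WeakSol} against the hypothesis and the conclusion of Theorem \ref{thm.SMPWeak}.
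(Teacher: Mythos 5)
Your proposal is correct and follows essentially the same route as the paper: verify the hypotheses of Theorem \ref{thm.SMPWeak} from Definition \ref{def.WeakSol}, apply the strong maximum principle, and then use item (2) of the definition to exclude the $u\equiv 0$ alternative. Nothing to add.
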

 \begin{proof}
  Since $u$ is a weak solution of \eqref{eq.pbDirSec2}, it follows
  from Definition \ref{def.WeakSol} that
  \begin{itemize}
   \item[(a)] $u\in\mathbb{X}^+_p(\Omega)$ (i.e., $u\geq 0$ a.e.\,in $\Omega$);
   \vspace*{0.05cm}
   
   \item[(b)] $|\{x\in\Omega:\,u(x) > 0\}| > 0$. 
  \end{itemize}
  In particular, from (b) we get that $u$ is not identically vanishing (a.e.) in $\Omega$,
  and the conclusion
  follows immediately from Theorem \ref{thm.SMPWeak}.
 \end{proof}

 \begin{remark} \label{rem.Eigenvalue}
 By carefully scrutinizing the proof of Theorem \ref{thm.SMPWeak}, one can easily
 check that the properties of $f$ which have \emph{actually} played a role 
 are:
 \begin{enumerate}
  \item $f(x,0)\geq 0$ for a.e.\,$x\in\Omega$;
  \item $f(x,t)\geq -c_ft^{p-1}$ for a.e.\,$x\in\Omega$ and every $0<t<1$;
  \item $f(x,t) {\geq} c_p(1+t^{p-1})$ for a.e.\,$x\in\Omega$ and every $t\geq 1$.
 \end{enumerate}
 As a consequence, the strong maximum principle in Theorem \ref{thm.SMPWeak} holds
 for weak solutions of every boundary-value problem of the  type
 \begin{equation} \label{eq.generalDirPb}
 \begin{cases}
  \LL_{p,s}u = g(x,u) & \text{in $\Omega$}, \\
      u \gneqq 0 & \text{in $\Omega$}, \\
  u\equiv 0 & \text{in $\R^n\setminus\Omega$}.
  \end{cases}
  \end{equation}
 where $g:\Omega\times\R\to\R$ is a Carath\'{e}odory function satisfying
 (1)--(3) and the growth condition in \eqref{cresc}.

 A remarkable example of a map $g$ satisfying conditions (1)--(3) above is
 \begin{equation}\label{glambda}
g_{\lambda}(x,t) := \big(-a(x)+\lambda)|t|^{p-2}t,
\end{equation}
 where $\lambda\in\R$ and $a\in L^\infty(\Omega)$. 
 The boundary-value problem associated with
 this function $g_\lambda$
 is the (Di\-ri\-chlet) $\LL_{p,s}$-eigenvalue problem 
\[
 \begin{cases}
  \LL_{p,s}u + a(x)|u|^{p-2}u = \lambda |u|^{p-2}u & \text{in $\Omega$}, \\
  u\equiv 0 & \text{in $\R^n\setminus\Omega$},
  \end{cases}
\]
 which shall be extensively studied in Section \ref{sec.Eigenvalue}.
 \end{remark}

 \section{Uniqueness and boundedness of weak solutions} \label{sec.uniqueness}
 The aim of this section is to establish \emph{uniqueness and boundedness}
 of weak solutions to problem \eqref{eq.pbDirSec2}.

We start by proving that weak solutions are globally bounded. We stress this ``re\-gu\-la\-ri\-ty result"
 requires $f$ to satisfy \emph{only} assumptions (f1)-- (f3).
 \begin{theorem} \label{thm:uzeroglobalBd}
  Let $u_0\in\mathbb{X}_p(\Omega)$ be a nonnegative weak solution of 
  \eqref{eq.pbDirSec2} with $f$ satisfying \emph{(f1)}--\emph{(f3)}.
Then $u_0\in L^\infty(\Omega)$.
\end{theorem}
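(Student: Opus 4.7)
The plan is a Moser-type iteration. The key structural observation that reduces the problem to a local one is: for every nondecreasing $\psi\colon[0,+\infty)\to[0,+\infty)$ and every $a,b\ge 0$, the product $(a-b)(\psi(a)-\psi(b))$ is nonnegative; hence, testing \eqref{eq.defWeakSol} against $\varphi=\psi(u_0)$ (which is admissible since $u_0\ge 0$ and $u_0\equiv 0$ outside $\Omega$) makes the \emph{entire} nonlocal bilinear contribution nonnegative. Discarding it reduces the analysis to the usual $p$-Laplacian setting.

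Concretely, for $\beta\ge 0$ and $M>1$, I would test \eqref{eq.defWeakSol} against $\varphi_{\beta,M}:=u_{0,M}^{\,p\beta+1}$, where $u_{0,M}:=\min\{u_0,M\}\in\mathbb{X}_p(\Omega)\cap L^\infty(\R^n)$. A chain-rule computation identifies the local term with
$$\frac{p\beta+1}{(\beta+1)^p}\int_\Omega\bigl|\nabla u_{0,M}^{\,\beta+1}\bigr|^p\,dx,$$
the nonlocal term is nonnegative and is discarded, and the right-hand side is controlled via (f3) and the elementary bound $t^{p\beta+1}\le 1+t^{p(\beta+1)}$ as
$$\int_\Omega f(x,u_0)\,\varphi_{\beta,M}\,dx\le C\bigl(|\Omega|+\|u_0\|_{L^{p(\beta+1)}(\Omega)}^{p(\beta+1)}\bigr).$$
Applying the Sobolev inequality $\|v\|_{L^{p^*}(\Omega)}^p\le C_S\|\nabla v\|_{L^p(\Omega)}^p$ to $v=u_{0,M}^{\,\beta+1}$ and passing $M\to+\infty$ via monotone convergence, one obtains the reverse-H\"older chain
$$\|u_0\|_{L^{p^*(\beta+1)}(\Omega)}^{p(\beta+1)}\le C\,(\beta+1)^{p-1}\bigl(|\Omega|+\|u_0\|_{L^{p(\beta+1)}(\Omega)}^{p(\beta+1)}\bigr),\qquad\forall\,\beta\ge 0.$$
The case $p>n$ is automatic from the Morrey embedding $\mathbb{X}_p(\Omega)\hookrightarrow L^\infty(\Omega)$; the case $p=n$ is handled identically with $p^*$ replaced by any exponent $q<+\infty$.

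Starting from $u_0\in L^p(\Omega)$ (which holds since $\mathbb{X}_p(\Omega)\hookrightarrow L^p(\Omega)$) and iterating along the geometric sequence $\gamma_k:=(p^*/p)^k$, the exponents $p\gamma_k\to+\infty$ while the product of the Moser constants converges (since $\sum_k\gamma_k^{-1}\log\gamma_k<+\infty$), yielding a uniform bound $\|u_0\|_{L^{p\gamma_k}(\Omega)}\le K$ independent of $k$. Letting $k\to+\infty$ gives $\|u_0\|_{L^\infty(\Omega)}\le K$. The main technical obstacle is the careful tracking of the Moser constants $C(\beta+1)^{p-1}$ across the iteration together with the justification of the truncation step $M\to+\infty$, which requires monotone/dominated convergence applied to every term in the weak formulation, including the discarded nonlocal term whose non-negativity must be preserved under the limit.
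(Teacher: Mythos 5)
Your proposal is correct, and it takes a genuinely different route from the paper's. Both arguments hinge on the same structural observation: when the test function is a nondecreasing function of $u_0$, the nonlocal contribution $\iint J_p(u_0(x)-u_0(y))(\varphi(x)-\varphi(y))|x-y|^{-n-ps}\,dx\,dy$ is pointwise nonnegative and can be discarded, reducing everything to the pure $p$-Laplacian case. From there the two proofs diverge. The paper runs a De Giorgi level-set iteration: it first rescales $\tilde u_0=\delta^{1/(p-1)}u_0$ so that $U_0=\|\tilde u_0\|_{L^p}^p$ is small, tests against the truncations $w_k=(\tilde u_0-(1-2^{-k}))_+$, derives the recursion $U_k\le \mathbf{c}'\eta^{k-1}U_{k-1}^{1+p/n}$, and invokes the standard fast-geometric-convergence lemma \cite[Lem.\,7.1]{Giusti} to force $U_k\to0$, hence $\tilde u_0\le 1$. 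You instead run a Moser iteration, testing against $u_{0,M}^{\,p\beta+1}$, passing $M\to+\infty$ by monotone convergence, and tracking the accumulated constants $\prod_k\bigl(C\gamma_k^{p-1}\bigr)^{1/(p\gamma_k)}$, whose logarithm is a convergent series because $\gamma_k=(p^*/p)^k$ grows geometrically. Each approach has its merits: yours avoids the small-$\delta$ normalization and the appeal to Giusti's lemma, proceeding self-containedly from $u_0\in L^p$; the paper's De Giorgi scheme is a bit shorter once the lemma is quoted and yields the quantitative bound $\|u_0\|_{L^\infty}\le\delta^{-1/(p-1)}$ explicitly in terms of $\|u_0\|_{L^p}$.

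Two small remarks on your write-up. First, the final caveat about ``the discarded nonlocal term whose non-negativity must be preserved under the limit'' is unnecessary: the nonlocal term is dropped at fixed $M$, and the $M\to+\infty$ limit is then applied only to $\|u_{0,M}^{\beta+1}\|_{L^{p^*}}$ via monotone convergence, with the right-hand side independent of $M$. Second, before iterating you should note explicitly that the bound $\frac{(\beta+1)^p}{p\beta+1}\le(\beta+1)^{p-1}$ (valid since $p\beta+1\ge\beta+1$ for $p\ge 1$) is what produces the stated $(\beta+1)^{p-1}$ factor, and that each step of the iteration is legitimate precisely because the previous step guarantees $u_0\in L^{p(\beta+1)}(\Omega)$, so the right-hand side is finite. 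With those details filled in, the argument is complete.
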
   
\begin{proof}
To begin with, we arbitrarily fix $\delta \in (0,1)$ and we set
   $$\tilde{u}_0 := \delta^{1/(p-1)}u_0.$$
Then $\tilde u_0$ solves
   \begin{equation} \label{eq:uzeroPDE}
   \begin{cases} 
 -\Delta_p \tilde u_0+ (-\Delta)^s_p\tilde u_0 = \delta f(x,u_0) & \text{in $\Omega$}, \\
   \tilde u_0 \gneqq 0 & \text{in $\Omega$}, \\
   \tilde u_0 \equiv 0 & \text{in $\R^n\setminus \Omega$}.
   \end{cases}
   \end{equation}
  Now, for every $k\geq 0$, we define $C_k := 1-2^{-k}$ and
$$
v_k:=\tilde{u}_0-C_k, \quad w_k:=(v_k)_+:=\max\{v_k,0\},\quad U_k:= \|w_k\|_{L^{p}(\Omega)}^p.
$$
We explicitly point out that, in view of these definitions, one has
\begin{itemize}
 \item[(a)] $\|\tilde{u}_0\|^p_{L^p(\Omega)} = \delta^{p'}\,\|u_0\|^p_{L^p(\Omega)}$
 (with $1/p' = 1-1/p$);
 \vspace*{0.05cm}
 
 \item[(b)] $w_0 = v_0 = \tilde{u}_0$ (since $C_0 = 0$);
 \vspace*{0.05cm}
 
 \item[(c)] $v_{k+1}\leq v_{k}$ and $w_{k+1}\leq w_{k}$ (since $C_k < C_{k+1}$).
  \end{itemize}
  We now observe that, since $u_0\in\mathbb{X}_p(\Omega)\subseteq W^{1,p}(\R^n)$, we
  have $v_k\in W^{1,p}_{\mathrm{loc}}(\R^n)$; fur\-ther\-mo\-re, since $\tilde{u}_0 =  u_0\equiv 0$
  a.e.\,in $\R^n\setminus\Omega$, one also has
  $$v_k = \tilde{u}_0-C_k = -C_k < 0\quad\text{on $\R^n\setminus\Omega$},$$
  and thus $w_k = (v_k)_+\in\mathbb{X}_p(\Omega)$
  (remind that $\Omega$ is bounded).
  We are then entitled to use the function $w_k$ as a \emph{test function}
  in \eqref{eq:uzeroPDE}, obtaining
  \begin{equation} \label{eq:PDEsolveduzerotilde}
  \begin{split}
   \int_{\Omega}|\nabla \tilde u_0|^{p-2}\langle \nabla \tilde{u}_0,\nabla w_k\rangle\, dx
   & +  \iint_{\R^{2n}}\frac{J_p(\tilde{u}_0(x)-\tilde{u}_0(y))(w_k(x)-w_k(y))}{|x-y|^{n+2s}}\, dx\, dy
   \\
   & = \delta \int_{\Omega}f(x,u_0)w_k\, dx.
   \end{split}
  \end{equation}
To proceed further, we notice that for any measurable function $z$ and for 
(almost every) couple of points $x,y\in\R^n$, one has
\[
|z_+(x)-z_+(y)|^p\leq |z(x)-z(y)|^{p-2}(z(x)-z(y))(z_+(x)-z_+(y))
\]
see \cite[Equation (14)]{frapa} or \cite[Equation (16)]{mpl}, so that, by choosing $z=v_k$,  since 
$$v_k(x)-v_k(y)=\tilde u_0(x)-\tilde u_0(y),$$ 
we find
\begin{equation}\label{EAFS2}
|w_{k}(x)-w_{k}(y)|^p \leq |\tilde u_0(x)-\tilde u_0(y)|^{p-2}(\tilde u_0(x)-\tilde u_0(y))(w_k (x)-w_k(y)).
\end{equation}
  Moreover, taking into account the definition of $w_k$, we get
  \begin{equation} \label{eq:localpart}
  \begin{split}
   \int_\Omega |\nabla \tilde u_0|^{p-2}\langle \nabla\tilde{u}_0,\nabla w_k\rangle \, dx 
   & =
	\int_{\Omega\cap\{ \tilde{u}_0 > C_k\}}|\nabla \tilde u_0|^{p-2}
	\langle \nabla\tilde{u}_0,\nabla v_k\rangle\, dx  \\
	& =
	\int_\Omega|\nabla w_{k}(x)|^p\, dx.
	\end{split}
  \end{equation}
  Gathering \eqref{eq:PDEsolveduzerotilde}-\eqref{eq:localpart}
  and assumption (f3), we obtain
  \begin{equation} \label{eq:estimnablawkI} 
  \begin{split}
    \int_\Omega|\nabla w_{k}|^p\, dx
    & \leq \delta \int_{\Omega}|f(x,u_0)|\,w_k\, dx \\
    & \leq c \int_{\Omega}(\delta+\delta u_0^{p-1})\,w_k\, dx 
   \leq c\int_{\Omega}(1+\tilde{u}_0^{p-1})w_k\, dx,
   \end{split}
  \end{equation}
since $\delta<1$.
We then recall that, for every $k\geq 1$, one has
\begin{equation} \label{eq:tildeuzeroleqwk}
\tilde u_0(x)<(2^k-1)w_{k-1}(x) \quad \text{for} \quad x\in \lbrace w_{k}>0 \rbrace,
\end{equation}
and the inclusions 
\begin{equation}\label{HHA} 
\{ w_{k}>0\} = \{\tilde{u}_0 > C_k\} 	\subseteq \{w_{k-1}>{2^{-k}}\}
\end{equation}
hold true for every $k\geq 1$, see \cite{frapa} or \cite{mpl}.
By combining \eqref{eq:tildeuzeroleqwk} and \eqref{HHA}  with \eqref{eq:estimnablawkI}, and taking into account that $w_k\leq w_{k-1}$ a.e.\,in $\R^n$, for every $k\geq 1$, we get
\begin{equation} \label{eq:estimnablawkII}
   \begin{aligned}
 \int_\Omega|\nabla w_{k}|^p\, dx     &
    \leq c\,\int_{\{w_k > 0\}}(1+\tilde{u}_0^{p-1}) w_k\, dx \\[0.1cm]
    &  \leq c\,\,\int_{\{w_k > 0\}}
\left[w_{k-1}+(2^k-1)^{p-1}w_{k-1}^{p}\right] dx \\[0.1cm]
    &    \leq c\int_{\{w_{k-1} > 2^{-k}\}}\left[2^{k(p-1)}w_{k-1}^p+(2^k-1)^{p-1}w_{k-1}^p\right]\, dx\\
   & \leq c\,2^{kp}\,\int_{\{w_{k-1} > 2^{-k}\}}w_{k-1}^p dx \\
   & \leq c\,2^{kp}\int_{\Omega} w_{k-1}^p dx
   = c\,2^{kp}U_{k-1}.
\end{aligned}
\end{equation}
 We now estimate from below the term $U_{k-1}$ in the right-hand side
 of \eqref{eq:estimnablawkII}. To this end we first observe that,
	as a consequence of \eqref{HHA}, we obtain
	\begin{equation} \label{eq:Uklower}
	 \begin{split}
	 U_{k-1} & = \int_{\Omega}w_{k-1}^p\, dx
	 \geq \int_{\{w_{k-1} > 2^{-k}\}}w_{k-1}^p\, dx \\[0.15cm]
	 & \geq 2^{-kp}\,\big|\{w_{k-1} > 2^{-k}\}\big|
	 \geq 2^{-kp}\big|\{w_k > 0\}|.
	 \end{split}
	\end{equation}
	Using the H\"older inequality (with exponents $p^*/p$ and $n/p$), jointly with the
	Sobolev inequality, from 
	\eqref{eq:estimnablawkII}-\eqref{eq:Uklower}
	we obtain the following estimate:
	\begin{equation} \label{eq:finaleperconcludere}
	 \begin{split}
	 U_k &= \|w_k\|^p_{L^p(\Omega)}
	 \leq \bigg(\int_\Omega w_k^{p^*}\, dx\bigg)^{p/{p^*}}\,
	 \big|\{w_k > 0\}\big|^{p/n} \\[0.1cm]
	 & \leq \mathbf{c}_S\,\int_\Omega|\nabla w_k|^p\, dx
	 \cdot
	 \big|\{w_k > 0\}\big|^{p/n} \\[0.1cm]
	 & \leq \mathbf{c}_S\,\big(c\,2^{kp}\,U_{k-1}\big)\,\big(
	 2^{kp}U_{k-1}\big)^{p/n} \\[0.2cm]
	 & = \mathbf{c}'\,\big(2^{p+p^2/n}\big)^{k-1}\,U_{k-1}^{1+p/n}
	 \qquad (\text{with $\mathbf{c}' := c\,2^{p+p^2/n}\,\mathbf{c}_S$}),
	 \end{split}
	\end{equation}
for every $k\geq 1$, where $\mathbf{c}_S$ is given by the Sobolev inequality.

Estimate \eqref{eq:finaleperconcludere} can be re-written as
\[
U_k\leq \mathbf{c}'\eta^{k-1}U_{k-1}^{1+p/n},
\]
where
\[
\eta := 2^{p+p^2/n} > 1.
\]
Hence, from \cite[Lem.\,7.1]{Giusti} we get that $U_k\to 0 $ as $k\to\infty$, provided that
\[
U_0 = \|\tilde{u}_0\|^p_{L^p(\Omega)} = \delta^{p'}\|u_0\|^p_{L^p(\Omega)}
	< (\mathbf{c}')^{-n/p}\,\eta^{-n^2/p^2}. 
\]
As a consequence, if $\delta > 0$ is small enough, we obtain
	$$0 = \lim_{k\to\infty} U_k = 
	\lim_{k\to\infty}\int_\Omega(\tilde{u}_0-C_k)_+^2\, dx
	= \int_{\Omega}(\tilde{u}_0-1)_+^2\, dx.$$
	Bearing in mind that $\tilde{u}_0 =\delta^{1/(p-1)}u_0$ (and $u_0\geq 0$), we then get
	$$0\leq u_0\leq \frac{1}{\delta^{1/(p-1)}}\qquad\text{a.e.\,in $\Omega$},$$
	from which we conclude that $u_0\in L^\infty(\Omega)$.
 \end{proof} 

\begin{remark}
We notice that an analogous result still holds true, with suitable adaptations in the powers of $u_0$ or $w_k$ in the right-hand sides of the inequalities in the above proof, also when $f$ satisfies (f1), (f2) and \eqref{cresc}. However, in view of the $p-$linear growth in the Brezis-Oswald theorem, we preferred to maintain such a case for the presentation of the proof. 
\end{remark}

\medskip

We are now ready to state and prove the main result of this section.
 \begin{theorem} \label{thm.uniqueness}
Let $f$ satisfy {\rm (f1)--(f5)}. Then there exists \emph{at most} one weak so\-lu\-tion
  $u\in\mathbb{X}_p(\Omega)$ of problem \eqref{eq.pbDirSec2}.
 \end{theorem}
 In order to prove Theorem \ref{thm.uniqueness}, we need
 the following elementary lemma.
 \begin{lemma} \label{lem.elemntare}
  Let $v,w\in\R^n$  and set
  $$\mathcal{A}_p(v,w) := |v|^p+(p-1)|w|^p-p|w|^{p-2}\langle v,w\rangle.$$
  Then, $\mathcal{A}_p(v,w)\geq 0$.
 \end{lemma}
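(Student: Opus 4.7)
The inequality is a pointwise convexity statement, and the natural approach is to read it as the subgradient inequality for the strictly convex function
$F:\mathbb{R}^n\to\mathbb{R}$ defined by $F(v):=|v|^p/p$. Its gradient is $\nabla F(v)=|v|^{p-2}v$ (with the convention $\nabla F(0)=0$, which is consistent since $p>1$), so convexity of $F$ gives the inequality
\[
F(v)\geq F(w)+\langle \nabla F(w),\,v-w\rangle \qquad\forall\,v,w\in\mathbb{R}^n.
\]
Expanding and multiplying through by $p$ yields exactly $\mathcal{A}_p(v,w)\geq 0$, so my plan is essentially to verify the convexity of $F$ and then collect terms.

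Concretely, I would proceed as follows. First, I dispose of the case $w=0$: then $|w|^{p-2}\langle v,w\rangle$ is interpreted as $0$ (since $p>1$ makes the formal product $|w|^{p-2}w$ vanish in the limit $w\to 0$), and $\mathcal{A}_p(v,0)=|v|^p\geq 0$. For $w\neq 0$, I would offer a direct and self-contained argument based on Young's inequality: from the Cauchy--Schwarz inequality,
\[
p\,|w|^{p-2}\langle v,w\rangle \leq p\,|v|\,|w|^{p-1},
\]
and then from Young's inequality with conjugate exponents $p$ and $p/(p-1)$,
\[
p\,|v|\,|w|^{p-1}\leq |v|^p+(p-1)|w|^p.
\]
Subtracting delivers $\mathcal{A}_p(v,w)\geq 0$.

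I do not anticipate a genuine obstacle here — the statement is the well-known ``discrete convexity'' inequality underlying the standard monotonicity properties of the $p$-Laplacian, and either of the two arguments above is essentially one line. The only minor care required is the convention about $|w|^{p-2}w$ when $w=0$ (harmless for $p>1$) and the fact that equality in Young's inequality only forces $|v|=|w|$, not $v=w$, which is consistent with $\mathcal{A}_p$ not being strictly positive for all $v\neq w$. Since the lemma only claims non-negativity, this is a non-issue.
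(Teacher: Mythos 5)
Your proposal is correct and, in its ``concrete'' form, follows essentially the same route as the paper: both first apply Cauchy--Schwarz to bound $p|w|^{p-2}\langle v,w\rangle$ by $p|v|\,|w|^{p-1}$, reducing the claim to a one-variable scalar inequality. The only difference is how that scalar inequality is closed: you invoke Young's inequality with exponents $p$ and $p/(p-1)$, whereas the paper re-derives it by setting $|w|=t|v|$ and minimizing $\ell_p(t)=1+(p-1)t^p-pt^{p-1}$ over $t\geq 0$; these are the same fact in two costumes. Your opening remark that $\mathcal{A}_p(v,w)\geq 0$ is the gradient (first-order convexity) inequality for $F(v)=|v|^p/p$ is a clean conceptual framing that the paper does not make explicit, and it correctly explains why equality does not force $v=w$ (only $|v|=|w|$ after Cauchy--Schwarz), but it is used as motivation rather than as the written proof. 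In short: same proof, with Young's inequality cited rather than re-proved.
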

 \begin{proof}
  We first notice that, if $v = 0$ or $w = 0$, the conclusion
  of the lemma is trivial. We then assume that
  $v,w\neq 0$, and we let $t > 0$ be such that
  \begin{equation} \label{eq.choicetLemma}
   |w| = t|v|.
   \end{equation}
 Using Cauchy-Schwarz's inequality and \eqref{eq.choicetLemma}, we have 
  \begin{align*}
   \mathcal{A}_p(v,w) & \geq |v|^p+(p-1)|w|^p-p|w|^{p-1}|v|
   \\
   & = |v|^p\big(1+(p-1)t^p-p\,t^{p-1}\big) =: |v|^p\cdot\ell_p(t).
   \end{align*}
  From this, since an elementary computation shows that
  $$\ell_p(s)\geq \ell_p(1) = 0\qquad\text{for every $s\geq 0$},$$
  we readily conclude that $\mathcal{A}_p(v,w)\geq 0$, as desired.
 \end{proof}
 Thanks to Lemma \ref{lem.elemntare}, we can proceed with the proof of Theorem \ref{thm.uniqueness}.
 
 \begin{proof} [Proof of Theorem $\ref{thm.uniqueness}$]
  Let $u_1,u_2\in\mathbb{X}_p(\Omega)$ be two solutions of 
  \eqref{eq.pbDirSec2}. 
  In order to show that $u_1 = u_2$ a.e.\,in $\Omega$, we arbitrarily
  fix $\varepsilon > 0$ and we define
  $$\varphi_{1,\varepsilon} :=
  r_{1,\varepsilon}-u_1,\qquad
  \varphi_{2,\varepsilon}
  := r_{2,\varepsilon}-u_2,
  $$
  where
  $$r_{1,\varepsilon}
  := \frac{u_2^p}{(u_1+\varepsilon)^{p-1}},\qquad
  r_{2,\varepsilon} := \frac{u_1^p}{(u_2+\varepsilon)^{p-1}}.$$
  Taking into account that $u_1,u_2\in\mathbb{X}_p(\Omega)$, $u_1,u_2\geq 0$ a.e.\,in $\Omega$
  and that $u_1, u_2$ are \emph{globally bounded in $\Omega$} 
  (as it follows Theorem \ref{thm:uzeroglobalBd}),
   we readily infer that
  $\varphi_{i,\varepsilon}\in\mathbb{X}_p(\Omega)$ for every
  $\varepsilon > 0$ and $i = 1,2$. Hence,  using $\varphi_{i,\varepsilon}$
  as a test function in \eqref{eq.defWeakSol} for $u_i$ and adding
  the resulting identities, we get
  \begin{equation} \label{eq.mainestimtest}
   \begin{split}
   & \int_{\Omega}
   |\nabla u_1|^{p-2}\langle \nabla u_1,\nabla\varphi_{1,\varepsilon}\rangle\, dx
   + \int_{\Omega}|\nabla u_2|^{p-2}
   \langle \nabla u_2,\nabla\varphi_{2,\varepsilon}\rangle\, dx
   \\
   & \qquad
   + \iint_{\R^{2n}}
   \frac{|u_1(x)-u_1(y)|^{p-2}(u_1(x)-u_1(y))(\varphi_{1,\varepsilon}(x)
   -\varphi_{1,\varepsilon}(y))}{|x-y|^{n+ps}}\, dx\, dy
   \\
   & \qquad
   + \iint_{\R^{2n}}
   \frac{|u_2(x)-u_2(y)|^{p-2}(u_2(x)-u_2(y))(\varphi_{2,\varepsilon}(x)
   -\varphi_{2,\varepsilon}(y))}{|x-y|^{n+ps}}\, dx\, dy \\
   & = \int_{\Omega}\big(f(x,u_1)\varphi_{1,\varepsilon}+
   f(x,u_2)\varphi_{2,\varepsilon}\big)\, dx.
   \end{split}
  \end{equation}
  Now, a direct computation based on the very definition of
  $\varphi_{i,\varepsilon}$ gives 
  \begin{align*}
  & \int_{\Omega}
   |\nabla u_1|^{p-2}\langle \nabla u_1,\nabla\varphi_{1,\varepsilon}\rangle\, dx
   + \int_{\Omega}|\nabla u_2|^{p-2}
   \langle \nabla u_2,\nabla\varphi_{2,\varepsilon}\rangle\, dx \\
   & \qquad
     = - \int_\Omega
   \!\!\!\mathcal{A}_p\Big(\nabla u_1,\frac{u_1}{u_2+\varepsilon}\nabla u_2\Big)\, dx
      -\int_\Omega
   \mathcal{A}_p\Big(\nabla u_2, \frac{u_2}{u_1+\varepsilon}\nabla u_1\Big)\, dx,
  \end{align*}
   where $\mathcal{A}_p$ is as in Lemma \ref{lem.elemntare}; as a consequence,
   taking into account that 
   $\mathcal{A}_p(\cdot,\cdot)\geq 0$
   (as we know from Lemma \ref{lem.elemntare}), identity
   \eqref{eq.mainestimtest} boils down to
   \begin{equation} \label{eq.topasslimitUniq}
   \begin{split} 
    & \int_{\Omega}\big(f(x,u_1)\varphi_{1,\varepsilon}+
   f(x,u_2)\varphi_{2,\varepsilon}\big)\, dx \\
   & \qquad
   \leq \iint_{\R^{2n}}
   \frac{J_p(u_1(x)-u_1(y))(r_{1,\varepsilon}(x)-r_{1,\varepsilon}(y))}
   {|x-y|^{n+ps}}\, dx\, dy
   \\
   &
   \qquad
   + \iint_{\R^{2n}}
   \frac{J_p(u_2(x)-u_2(y))(r_{2,\varepsilon}(x)
   -r_{2,\varepsilon}(y))}{|x-y|^{n+ps}}\, dx\, dy \\
   & 
   \qquad
   - \iint_{\R^{2n}}
   \frac{J_p(u_1(x)-u_1(y))
   (u_1(x)-u_1(y))}{|x-y|^{n+ps}}\, dx\, dy
   \\
   & \qquad
   -\iint_{\R^{2n}}
   \frac{J_p(u_2(x)-u_2(y))
   (u_2(x)-u_2(y))}{|x-y|^{n+ps}}\, dx\, dy \\[0.2cm]
   & \qquad
   =: \mathrm{I}_{1,\varepsilon}+\mathrm{I}_{2,\varepsilon}-\mathrm{J}_1
   -\mathrm{J}_2,
      \end{split}
   \end{equation}
   where we have introduced the standard notation
   $$J_p(t) := |t|^{p-2}t\qquad(t\in\R).$$
   We now aim at passing to the limit as $\varepsilon\to 0^+$ in the above
   \eqref{eq.topasslimitUniq}.
   \vspace*{0.05cm}
   
   To this end, we first remind the following 
   {discrete Picone inequality}:
   \emph{for every fixed $p\in (1,+\infty)$ and every $a,b,c,d\in[0,+\infty)$ with $a,b>0$, one has
  $$
   J_p(a-b)\left(\frac{c^p}{a^{p-1}}-\frac{d^p}{b^{p-1}}\right)\leq |c-d|^p,
  $$
  and equality holds if and only if}
   $$ad=bc$$
   (for a proof of this inequality see, e.g., \cite[Proposition 4.2]{BF} or  
   \cite[Proposition 2.2]{BS}).
   By using this inequality, we have
   \begin{itemize}
    \item[(i)]  $J_p(u_1(x)-u_1(y))
    (r_{1,\varepsilon}(x)-r_{1,\varepsilon}(y))
    \leq |u_2(x)-u_2(y)|^p$;
    \vspace*{0.08cm}
    
    \item[(ii)] $J_p((u_2(x)-u_2(y))
    (r_{2,\varepsilon}(x)-r_{2,\varepsilon}(y))
    \leq |u_1(x)-u_1(y)|^p$.
   \end{itemize}
   Hence, we are entitled to apply the Fatou lemma for the integrals 
   $\mathrm{I}_{1,\varepsilon},
   \mathrm{I}_{2,\varepsilon}$, obtaining
   \begin{equation} \label{eq.limitrhsUniq}
   \begin{split}
    & \limsup_{\varepsilon\to 0^+}
    \big(\mathrm{I}_{1,\varepsilon}+\mathrm{I}_{2,\varepsilon}-\mathrm{J}_1
   -\mathrm{J}_2\big) \\
   & \quad \leq
   \iint_{\R^{2n}}
   \frac{J_p(u_1(x)-u_1(y))}{|x-y|^{n+ps}}\bigg(
   \frac{u_2^p}{u_1^{p-1}}(x)-\frac{u_2^p}{u_1^{p-1}}(y)\bigg)\, dx\, dy \\
   & \quad
   +\iint_{\R^{2n}}
   \frac{J_p(u_2(x)-u_2(y))}{|x-y|^{n+ps}}\bigg(
   \frac{u_1^p}{u_2^{p-1}}(x)-\frac{u_1^p}{u_2^{p-1}}(y)\bigg)\, dx\, dy \\
   & \quad
   - \iint_{\R^{2n}}\frac{|u_1(x)-u_1(y)|^{p}}{|x-y|^{n+ps}}\, dx\, dy
   -\iint_{\R^{2n}}\frac{|u_2(x)-u_2(y)|^{p}}{|x-y|^{n+ps}}\, dx\, dy
   \\[0.2cm]
   & \quad =: \kappa(u_1,u_2,p),
   \end{split}
   \end{equation}
   where $\kappa(u_1,u_2,p) \in [-\infty,0]$ again by the discrete Picone inequality
   (here, to give a meaning to the integrals when $x$ or $y$ are not 
   in $\Omega$, we have tacitly set $0/0 = 0$).
   \medskip
   
   We now turn our attention to the left hand side of \eqref{eq.topasslimitUniq}.
   Taking into account the 
   very de\-fi\-ni\-tion of $\varphi_{i,\varepsilon}$, we first 
   write
   \begin{align*}
    & \int_{\Omega}\big(f(x,u_1)\varphi_{1,\varepsilon}+
   f(x,u_2)\varphi_{2,\varepsilon}\big)\, dx
   = \int_{\Omega}f(x,u_1)\,r_{1,\varepsilon}\, dx
   +\int_{\Omega}f(x,u_2)\,r_{2,\varepsilon}\, dx
    \\
    & \qquad\qquad\qquad
    - \int_{\Omega}f(x,u_1)u_1\, dx
    -\int_{\Omega}f(x,u_2)u_2\, dx\\[0.2cm]
    & \qquad\qquad\qquad
    =: \mathrm{A}_{1,\varepsilon}+ \mathrm{A}_{2,\varepsilon}
    - \mathrm{B}_{1}- \mathrm{B}_{2}.
   \end{align*}
   Moreover, recalling the value $\rho_f > 0$ in \eqref{eq.ineqrhof}, we further split
   $\mathrm{A}_{i,\varepsilon}$ as
   $$\mathrm{A}_{i,\varepsilon} = 
   \int_{\{u_i < \rho_f\}}f(x,u_i)\,r_{{i,\varepsilon}}\, dx
   + \int_{\{u_i \geq \rho_f\}}f(x,u_i)\,r_{{i,\varepsilon}}\, dx
   =: \mathrm{A}'_{i,\varepsilon}+\mathrm{A}''_{i,\varepsilon}.$$
   Now, by assumption (f3), for every $\varepsilon > 0$ we have
\[
|f(x,u_1)\,r_{{1,\varepsilon}}|\cdot\mathbf{1}_{\{u_1\geq\rho_f\}} 
    \leq c_p\big(1+\rho_f^{1-p}\big)\,u_2^p
    \equiv c_{p,f}\,u_2^p
 \]
and, analogously, 
\[
|f(x,u_2)\,r_{{2,\varepsilon}}|\cdot\mathbf{1}_{\{u_2\geq\rho_f\}} 
    \leq c_{p,f}\,u_1^p.
\]
Thus, we can then apply the Dominated Convergence theorem, obtaining
   \begin{equation} \label{eq.limitAsecond}
   \begin{split}
   & \mathrm{A}''_1:=\lim_{\varepsilon\to 0^+}
   \mathrm{A}''_{1,\varepsilon} = \int_{\{u_1\geq\rho_f\}}
   \frac{f(x,u_1)}{u_1^{p-1}}\,u_2^p\, dx\in\R\qquad
   \text{and}\\
   & \mathrm{A}''_2:=\lim_{\varepsilon\to 0^+}
   \mathrm{A}''_{2,\varepsilon} = \int_{\{u_2\geq\rho_f\}}
   \frac{f(x,u_2)}{u_2^{p-1}}\,u_1^p\, dx\in\R.
   \end{split}
   \end{equation}
   Hence, it remains to study the behavior of $A'_{i,\varepsilon}$ when
   $\varepsilon\to 0^+$.
   
   First of all, using \eqref{eq.ineqrhof} and the fact that $r_{i,\varepsilon}$ is
   nonnegative and monotone
   in\-crea\-sing with respect to $\varepsilon$, we can apply the Beppo Levi theorem,
   obtaining
   \begin{equation} \label{eq.limiAfirstStepI}
   \begin{split}
   & \mathrm{A}'_1 := \lim_{\varepsilon\to 0^+}
   \mathrm{A}'_{1,\varepsilon} = \int_{\{u_1 < \rho_f\}}
   \frac{f(x,u_1)}{u_1^{p-1}}\,u_2^p\, dx\in [0,+\infty]\qquad
   \text{and}\\
   & \mathrm{A}'_2:=\lim_{\varepsilon\to 0^+}
   \mathrm{A}'_{2,\varepsilon} = \int_{\{u_2 < \rho_f\}}
   \frac{f(x,u_2)}{u_2^{p-1}}\,u_1^p\, dx\in [0,+\infty].
   \end{split}
   \end{equation}
   
On the other hand, going back to estimate \eqref{eq.topasslimitUniq} and 
   taking into account 
   the very definitions of 
   the integrals $\mathrm{A}_{i,\varepsilon}',\mathrm{A}_{i,\varepsilon}'',
   \mathrm{B}_{i}$, we get
   \begin{align*}
    0\leq  \mathrm{A}'_{1,\varepsilon},\mathrm{A}'_{2,\varepsilon}
    & \leq  \mathrm{A}'_{1,\varepsilon}+\mathrm{A}'_{2,\varepsilon} \\
    & \leq 
    \mathrm{I}_{1,\varepsilon}+\mathrm{I}_{2,\varepsilon}-\mathrm{J}_{1}
   -\mathrm{J}_{2}
   + \mathrm{B}_{1}+
   \mathrm{B}_{2}-\mathrm{A}''_{1,\varepsilon}-\mathrm{A}''_{2,\varepsilon}.
   \end{align*}
   Then, by letting $\varepsilon\to 0^+$ 
   with the aid of \eqref{eq.limitrhsUniq}--\eqref{eq.limitAsecond}, we obtain
   \begin{equation*} 
   \begin{split}
    0\leq \mathrm{A}'_1,\mathrm{A}'_2 \leq \mathrm{A}'_1+\mathrm{A}'_2  
   \leq \kappa(u_1,u_2,p)
   +\mathrm{B}_1+\mathrm{B}_2-\mathrm{A}_1''-\mathrm{A}_2'',
   \end{split}
   \end{equation*}
   from which we derive at once that
   \begin{equation} \label{eq.limitAprimeFinite}
    \kappa(u_1,u_2,p) > -\infty\qquad\text{and}\qquad
    \mathrm{A}'_1,\mathrm{A}'_2 <+\infty.
   \end{equation}
   Gathering 
   \eqref{eq.limitAsecond}--\eqref{eq.limiAfirstStepI}, and taking
   into account \eqref{eq.limitAprimeFinite}, we finally have
   \begin{equation} \label{eq.limitLHSUniq}
    \begin{split} 
     & \lim_{\varepsilon\to 0^+}
     \bigg(\int_{\Omega}\big(f(x,u_1)\varphi_{1,\varepsilon}+
   f(x,u_2)\varphi_{2,\varepsilon}\big)\, dx\bigg) \\
   & \qquad
   = \lim_{\varepsilon\to 0^+}
   \big(\mathrm{A}'_{1,\varepsilon}+\mathrm{A}'_{2,\varepsilon}
   +\mathrm{A}''_{1,\varepsilon}+\mathrm{A}''_{2,\varepsilon}
   -\mathrm{B}_{1}-\mathrm{B}_{2}\big) \\
   & \qquad
   = \int_{\Omega}\Big(\frac{f(x,u_1)}{u_1^{p-1}}u_2^p
   +\frac{f(x,u_2)}{u_2^{p-1}}u_1^p
   - f(x,u_1)u_1-f(x,u_2)u_2\Big)\, dx \\
   & 
   \qquad 
   = -\int_{\Omega}\Big(\frac{f(x,u_1)}{u_1^{p-1}}-\frac{f(x,u_2)}{u_2^{p-1}}
   \Big)(u_1^p-u_2^p)\, dx.
    \end{split}
   \end{equation}
  With \eqref{eq.limitrhsUniq} and \eqref{eq.limitLHSUniq} at hand, we can easily
  conclude the proof of the theorem. In\-deed, using these cited identities
  we can let $\varepsilon\to 0^+$ in \eqref{eq.topasslimitUniq}, obtaining
  \begin{align*}
   -\int_{\Omega}\Big(\frac{f(x,u_1)}{u_1^{p-1}}-\frac{f(x,u_2)}{u_2^{p-1}}
   \Big)(u_1^p-u_2^p)\, dx
   \leq \kappa(u_1,u_2,p)\leq 0.
  \end{align*}
  From this, by crucially exploiting
 assumption
  (f4) we conclude that 
  $$\text{$u_1\equiv u_2$ a.e.\,in $\Omega$},$$
  and the proof is complete.
 \end{proof}
 
\section{The eigenvalue problem} \label{sec.Eigenvalue}
 
 As announced, here we consider the eigenvalue problem associated to $ \LL_{p,s}$ in presence of a weight $a\in L^\infty(\Omega)$, namely
\begin{equation}\label{eigen}
\begin{cases}
-\Delta_pu+(-\Delta)^s_p u  +a(x)|u|^{p-2}u= \lambda|u|^{p-2}u & \mbox{in}\  \Omega, \\
u\not\equiv 0, & \mbox{in}\ \Omega,\\
u=0& \mbox{in}\ \mathbb{R}^{n}\setminus \Omega.
\end{cases}
\end{equation}

\begin{proposition}\label{Proposition3}
 Let $a  \in L^\infty(\Omega)$.
 Then, problem \eqref{eigen} admits a smallest ei\-gen\-va\-lue 
 $\lambda_1 (\mathcal{L}_{p,s}+a) \in \R$ which is simple, 
 and whose associated eigenfunctions do not change sign in $\R^n$.
 Moreover, every eigenfunction associated to an eigenvalue 
 $$\lambda > \lambda_1(\mathcal{L}_{p,s}+a)$$ 
 is nodal, i.e., sign changing.
\end{proposition}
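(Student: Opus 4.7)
The plan is to realize $\lambda_1(\mathcal{L}_{p,s}+a)$ variationally as
\[
 \mu := \inf_{u \in \mathbb{X}_p(\Omega),\ \|u\|_{L^p(\Omega)}=1}
 \bigg\{ \mathcal{Q}_{p,s}(u) + \int_\Omega a(x)|u|^p\,dx \bigg\},
\]
and to read off every asserted property from this characterization. Because $a \in L^\infty(\Omega)$, I have $\mu \geq -\|a\|_{L^\infty(\Omega)} > -\infty$, and the functional is coercive on the $L^p$-sphere with respect to $\|\cdot\|_{\mathbb{X}_p}$. For a minimizing sequence $(u_k)$, reflexivity of $\mathbb{X}_p(\Omega)$ together with the compact Sobolev embedding $\mathbb{X}_p(\Omega) \hookrightarrow L^p(\Omega)$ yields a subsequence weakly convergent in $\mathbb{X}_p(\Omega)$ to some $u^*$ with strong $L^p$ convergence, hence $\|u^*\|_{L^p}=1$. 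Weak lower semicontinuity of the convex form $\mathcal{Q}_{p,s}$ and dominated convergence for the weight term confirm that $u^*$ attains $\mu$, and the standard Lagrange multiplier argument then shows that $u^*$ solves \eqref{eigen} weakly with $\lambda = \mu$, so $\mu$ is indeed an eigenvalue, and by construction the smallest one.

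To control the sign, I would exploit the pointwise inequalities $|\nabla|u|| = |\nabla u|$ a.e.\ and $\bigl||u|(x)-|u|(y)\bigr| \leq |u(x)-u(y)|$, which give $\mathcal{Q}_{p,s}(|u|) \leq \mathcal{Q}_{p,s}(u)$; since the weight term depends only on $|u|^p$, the function $|u^*|$ is itself a nonnegative minimizer, hence a nonnegative weak solution of \eqref{eigen}. Applying the strong maximum principle in the form prepared by Remark \ref{rem.Eigenvalue} for the model nonlinearity $g_\lambda(x,t)=(-a(x)+\lambda)|t|^{p-2}t$ then forces $|u^*| > 0$ a.e.\ in $\Omega$; the same argument applied to any other $\lambda_1$-eigenfunction shows that none of them can be sign changing.

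For simplicity, let $u_1, u_2 > 0$ be two $\lambda_1$-eigenfunctions. I plan to reuse verbatim the machinery already developed in the proof of Theorem \ref{thm.uniqueness}: test each eigenvalue equation with the regularized functions $\varphi_{i,\varepsilon}$, add, and send $\varepsilon \to 0^+$. Because here the ``nonlinearity'' is $f(x,t) = (\lambda_1 - a(x))\,t^{p-1}$, the quotient $f(x,t)/t^{p-1}$ is constant in $t$, so the terminal identity \eqref{eq.limitLHSUniq} collapses to zero on the left-hand side; Picone's discrete inequality and the nonnegativity of $\mathcal{A}_p$ from Lemma \ref{lem.elemntare} then force every involved inequality to be an equality, and the equality case of Picone's inequality yields $u_1 \equiv c\, u_2$ for some $c>0$.

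Finally, for the nodality of eigenfunctions associated to $\lambda > \lambda_1$, I argue by contradiction: if such a $v$ had constant sign, the symmetrization and the strong maximum principle again give $v > 0$ a.e.\ in $\Omega$. Testing the equation for $v$ against the regularized function $u_1^p/(v+\varepsilon)^{p-1}$, controlling the local part by $\mathcal{A}_p \geq 0$ and the nonlocal part by the discrete Picone inequality, and passing to the limit as $\varepsilon \to 0^+$ in the same Fatou/dominated convergence fashion as in Theorem \ref{thm.uniqueness}, I arrive at
\[
 \mathcal{Q}_{p,s}(u_1) + \int_\Omega a(x)\,u_1^p\,dx \;\geq\; \lambda \int_\Omega u_1^p\,dx,
\]
while testing the $u_1$-equation against $u_1$ itself gives that the left-hand side equals $\lambda_1 \int_\Omega u_1^p\,dx$, so $\lambda \leq \lambda_1$, a contradiction. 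The main technical hurdle across the proof is precisely this Picone-based passage to the limit in the simplicity and nodality steps, but it is exactly the bookkeeping already executed in the uniqueness theorem, which I would quote rather than redo from scratch.
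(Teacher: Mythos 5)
Your proposal is correct and, at the level of overall strategy, matches the paper: direct minimization on the $L^p$-sphere via weak lower semicontinuity and Rellich--Kondrachev, the symmetrization inequality $\mathcal{Q}_{p,s}(|u|)\leq\mathcal{Q}_{p,s}(u)$ to obtain a nonnegative minimizer, the strong maximum principle from Theorem~\ref{thm.SMPWeak} (via Remark~\ref{rem.Eigenvalue}) to push it to a positive one, and a Picone-type argument for simplicity and nodality. The only genuinely divergent point is the simplicity step. The paper tests \emph{only} the equation for $e_1$ with $v_\varepsilon=e_{2,\varepsilon}^p/(e_1+\varepsilon)^{p-1}$, handling the local term with the Allegretto--Huang nonlinear Picone identity \cite{1} and the nonlocal term with the discrete Picone inequality; all inequalities collapse to equalities and the equality case of Picone yields proportionality. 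You instead propose the symmetric Diaz--Sa\'a scheme of Theorem~\ref{thm.uniqueness}: test both equations with $\varphi_{i,\varepsilon}$, add, discard the local contributions using $\mathcal{A}_p\geq0$ from Lemma~\ref{lem.elemntare}, and read proportionality from the equality case of the nonlocal Picone inequality alone. Both routes work; yours is slightly more self-contained (no appeal to the local Picone identity), while the paper's single-equation test is leaner and avoids the full limit bookkeeping of the uniqueness theorem.

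One caveat you should flag if you carry this out: you cannot literally ``reuse verbatim'' the limit passage of Theorem~\ref{thm.uniqueness}, because that argument rests on assumption (f5) to apply Beppo Levi on the set $\{u_i<\rho_f\}$, whereas the eigenvalue nonlinearity $g(x,t)=(\lambda_1-a(x))|t|^{p-2}t$ need not be positive near $t=0$ (if $a(x)>\lambda_1$ on a set of positive measure). This is not a fatal gap — in the eigenvalue setting the terms $\mathrm{A}_{i,\varepsilon}$ are bounded by $(\|a\|_{L^\infty}+|\lambda_1|)\,u_j^p$, so the split into $\mathrm{A}'$ and $\mathrm{A}''$ is unnecessary and plain dominated convergence carries the limit — but the argument must be adapted rather than quoted wholesale. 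A second, minor looseness shared with the paper: showing $|u^*|>0$ by SMP does not by itself show that a $\lambda_1$-eigenfunction $u$ has \emph{constant} sign; one should note that if $u$ changed sign on sets of positive measure, then $||u(x)|-|u(y)||<|u(x)-u(y)|$ on a set of positive measure would force $\gamma(|u|)<\gamma(u)=\lambda_1$, contradicting minimality, so sign constancy follows before simplicity is invoked.
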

\begin{proof}
 Let $\gamma:\xp \to \R$ be the $C^1-$functional defined as
 $$
 \gamma(u)= 
 \int_\Omega 
   |\nabla u|^p dx+ \iint_{\R^{2n}}
   \!\!\!\frac{|u(x)-u(y)|^{p}}{|x-y|^{n+ps}}\, dx\, dy+\int_\Omega a(x)  |u|^p \, dx 
 $$
 for all $u \in \xp$, and let it be constrained on the $C^1 -$ Banach manifold
 $$
  M:= \left\{ u \in \xp:\int_\Omega |u|^p \, dx=1\right\}.
 $$
Define
\begin{equation}\label{6}
 \lambda_1(\mathcal{L}_{p,s}+a) := \inf\big\{\gamma(u): u \in M\big\}.
\end{equation}
 Let $\{u_n\}_{n \ge1}\subseteq M$ be a minimizing sequence for \eqref{6}.  Since 
 $$ \lambda_1 (\mathcal{L}_{p,s}+a) \ge - \|a(x) \|_{L^\infty(\Omega)},$$ 
 we immediately get that $\{u_n\}_{n \ge 1} \subseteq \xp$ 
 is bounded and so we may assume that there exists $e_1\in M$ such that
\begin{equation}\label{7}
 u_n \rightharpoonup e_1 \; \text{ in } \xp\qquad\text{as $n\to+\infty$}.
\end{equation}
 In particular, by the Rellich-Kondrachev embedding theorem, we know that
\begin{equation}\label{conv}
u_n\to e_1 \text{ in }L^p(\Omega).
\end{equation}
By \eqref{7} and \eqref{conv}, we have
\begin{align*}
\gamma(e_1)& = 
\int_\Omega |\nabla e_1|^p dx + 
\iint_{\R^{2n}}
   \!\!\!\frac{|e_1(x)-e_1(y)|^{p}}{|x-y|^{n+ps}}\, dx\, dy +\int_\Omega a(x) |e_1|^p\, dx 
 + \\[0.2cm]
& \le 
 \liminf_{n \rightarrow +\infty}\gamma(u_n)=\lambda_1 (\mathcal{L}_{p,s}+a).
\end{align*}
Since $e_1\in M$, due to \eqref{conv}, by \eqref{6} we get 
$$
\gamma (e_1) =  \lambda_1(\mathcal{L}_{p,s}+a).
$$
 By the Lagrange multiplier rule, we infer that 
 $\lambda_1(\mathcal{L}_{p,s}+a)$ is the smallest  eigenvalue for problem \eqref{eigen}, with associated  
 eigenfunction $e_1 \in \xp$. Finally, notice that
 $$
 \gamma (|u|) \leq  \gamma (u)\; \text{ for all } u \in X^s_\beta,
 $$
 and so we may assume that $e_1 \ge0$ in $\R^n$. Since $\|e_1\|_{L^{p}(\Omega)}=1$ by construction,
 we can then apply Theorem \ref{thm.SMPWeak} and Remark \ref{rem.Eigenvalue} to conclude that 
 $$e_1 (x)> 0, \quad \textrm{for a.e. } x \in \R^n.$$
 Now, we prove that $e_1$ is simple. To this end, let
 $u \in \xp$ be another
 eigenfunction associated to $ \lambda_1(\mathcal{L}_{p,s}+a)$. We first
 claim that
 $u$ has \emph{constant sign}: in fact, taking into account that
 the eigenfunctions associated to $\lambda_1(\mathcal{L}_{p,s}+a)$
 are precisely the constrained minimizers of $\gamma$, we have
 $$\gamma(u) = \lambda_1(\mathcal{L}_{p,s}+a)\|u\|^p_{L^p(\Omega)};$$
 on the other hand, if both $\{u > 0\}$ and $\{u < 0\}$ have positive
 Lebesgue measure, by arguing exactly as in the proof of \cite[Proposition 9]{SeVa},
 we have
 $$\gamma(|u|) < \gamma(u) = \lambda_1(\mathcal{L}_{p,s}+a)\|u\|^p_{L^p(\Omega)},$$
 which is clearly in contradiction with the fact that $\lambda_1(\mathcal{L}_{p,s}+a)$
 is the minimum of $\gamma$. Hence, $u$ has constant sign in $\Omega$ and we can
 assume that $u\geq 0$ a.e.\,in $\Omega$; from this, using once again
  Theorem \ref{thm.SMPWeak} and Remark \ref{rem.Eigenvalue}, we obtain 
 \begin{equation} \label{eq:epositiveVal}
  \text{$u > 0$ a.e.\,in $\Omega$}.
 \end{equation}
 With \eqref{eq:epositiveVal} at hand, we now turn to prove
 that there exists $\alpha\geq 0$ such that
 $$e_1 = \alpha u.$$ 
 To this end we 
  observe that, on account of Theorem \ref{thm:uzeroglobalBd} and \eqref{glambda}
  in Remark \ref{rem.Eigenvalue},
 we know that $e_1,u\in L^\infty(\Omega)$. 
 Given any $\e>0$, we then define
 $$ 
 v_{\e}=\frac{u^p}{(e_1+\e)^{p-1}}.$$ 
 Since $v_{\e}\in \xp$ (as the same is true of both $e_1$ and $u$), we are entitled to use
  $v_{\e}$ as test function in  
 the problem solved by $e_1$. Thus, using again the notation
 $$
	J_p(t):=|t|^{p-2}t, \quad (t\in \R),
 $$
we obtain
\begin{equation}\label{eqe_1}
\begin{split}
 & \int_\Omega 
 |\nabla e_1|^{p-2}\langle \nabla e_1, \nabla v_{\e}\rangle dx \\
 & \qquad\qquad
 + \iint_{\mathbb{R}^{2n}} \frac{J_p((e_1+\e)(x)-(e_1+\e)(y))(v_{\e}(x)
  -v_{\e}(y))}{|x-y|^{n+2s}}\, dx\, dy \\
 & \qquad = 
  \lambda_1(\mathcal{L}_{p,s}+a)\int_{\Omega}e_1^{p-1}\,v_{\e}\, dx
   -\int_\Omega a(x)e_1^{p-1}\,v_{\e}\, dx.
\end{split}
\end{equation}
By the already recalled discrete Picone inequality, we find
$$
J_p((e_1+\e)(x)-(e_1+\e)(y))(v_\e(x)-v_\e(y))\leq |u(x)-u(y)|^p.
$$
Now, consider the function
\[
R(u,e_1+\e)= |\nabla u|^p - |\nabla e_1|^{p-2} 
 \langle \nabla e_1, \nabla {v_\e}\rangle.
\]
As a consequence of the nonlinear Picone identity by Allegretto - Huang in \cite{1} 
(see also \cite[p. 244]{MoMoPa}), we have that  $R(u,e_1+\e)\geq 0$. Then
\begin{equation}\label{pico}
|\nabla e_1|^{p-2} \langle \nabla e_1, \nabla v_\e\rangle \leq |\nabla u|^p.
\end{equation}
 Gathering these facts, 
 we can pass to the limit as $\e\to 0$ in \eqref{eqe_1}: 
  by applying the Fatou lemma in the left had side of \eqref{eqe_1} 
  and the Dominated Con\-ver\-gen\-ce theorem in the right hand side, we find
\begin{equation}\label{quasi}
\begin{split}
  & \int_\Omega |\nabla e_1|^{p-2}\Big\langle \nabla e_1, \nabla\Big(\frac{u^p}{e_1^{p-1}}
  \Big)\Big\rangle\, dx \\
  & \qquad\qquad
  + \iint_{\mathbb{R}^{2n}}\frac{J_p(e_1(x)-e_1(y))}{|x-y|^{n+2s}}
  \left(\frac{u^p(x)}{e_1^{p-1}(x)}-\dfrac{u^p(y)}{e_1^{p-1}(y)}\right) dx\, dy  \\
& \qquad \geq \lambda_1(\mathcal{L}_{p,s}+a)
 \int_{\Omega} u^p\, dx-\int_\Omega a(x)u^p\, dx \\
 & \qquad = 
  \int_\Omega |\nabla u|^p\, dx+\iint_{\R^{2n}}\frac{|u(x)-u(y)|^p}{|x-y|^{n+ps}}\, dx\, dy.
\end{split}
\end{equation}
 On the other hand, by using again inequality \eqref{pico},
  we have the estimate
\begin{equation}\label{quasi2}
\begin{split}
 &\int_\Omega |\nabla e_1|^{p-2}\Big\langle \nabla e_1, \nabla\Big(\frac{u^p}{e_1^{p-1}}
  \Big)\Big\rangle\, dx \\
  & \qquad\qquad 
  + \iint_{\mathbb{R}^{2n}}\frac{J_p(e_1(x)-e_1(y))}{|x-y|^{n+2s}}
  \left(\frac{u^p(x)}{e_1^{p-1}(x)}-\dfrac{u^p(y)}{e_1^{p-1}(y)}\right) dx\, dy 
   \\[0.1cm]
  & \qquad
  \leq  
  \int_\Omega |\nabla u|^p\, dx
  + \iint_{\R^{2n}}\frac{|u(x)-u(y)|^p}{|x-y|^{n+ps}}\, dx\, dy.
\end{split}
\end{equation}
 Hence, all the inequalities in \eqref{quasi} and \eqref{quasi2} are actually identities. In particular, 
 the discrete Picone inequality implies that
 $$
  \frac{e_1(x)}{e_1(y)}=\frac{u(x)}{u(y)} \mbox{ in $\mathbb{R}^{2n}$},
 $$
and so we can conclude that there exists $\alpha \geq 0$ such that 
$$\text{$e_1=\alpha u$ in $\R^n$}.$$
Now, suppose that $\lambda > \lambda_1(\mathcal{L}_{p,s}+a)$ is another eigenvalue of 
\eqref{eigen} with  
 associated $L^p-$normalized eigenfunction $u \in \xp$, and assume by contradiction that $u$ has constant 
 sign, say $u\geq0$. By Theorem \ref{thm.SMPWeak} we have $u>0$.

 Then, starting from the equation solved by $u$ and using
 $$
  \frac{e_1^p}{(u+\e)^{p-1}}
 $$
 as test function, by arguing exactly as for reaching \eqref{quasi}-\eqref{quasi2}, we get
$$
 \int_\Omega |\nabla e_1|^p\, dx + \iint_{\R^{2n}}\frac{|e_1(x)-e_1(y)|^p}{|x-y|^{n+ps}}\, dx\, dy 
 =\lambda 
-\int_\Omega a(x)  e_1^p\, dx.
$$
On the other hand, $e_1$ being a solution to \eqref{eigen} with $\lambda_1$, we have
$$
 \int_\Omega |\nabla e_1|^p\, dx
 + \iint_{\R^{2n}}\frac{|e_1(x)-e_1(y)|^p}{|x-y|^{n+ps}}\, dx\, dy 
 +\int_\Omega a(x) e_1^p\, dx=\lambda_1(\mathcal{L}_{p,s}+a).
$$
Since $\lambda>\lambda_1(\mathcal{L}_{p,s}+a)$, we get a contradiction, and thus $u$ must change sign.
\end{proof}

\section{Existence}\label{sec.Existence}
 In this last section we combine all the results established so far
 in order to give the proof of Theorem \ref{thm:Main}. Throughout what follows,
 we tacitly adopt all the notation introduced
 in Sections \ref{sec.NotPrel}-\ref{sec.Eigenvalue}: in particular,
 \begin{itemize}
  \item $\Omega\subseteq\R^n$ is a bounded open set with $C^1$ boundary;
  \item $f:\Omega\times\R\to\R$ satisfies (f1)--(f5);
  \item $a_0$ and $a_\infty$ are the functions defined in \eqref{a0ainfty};
  \item $\lambda_1(\LL_{p,s}-a_0)$ and $\lambda_1(\LL_{p,s}-a_\infty)$
  are defined in \eqref{eq:deflambdaBO}.
 \end{itemize}
 \begin{remark} \label{rem:assumptionf5bis}
  As already pointed out in the Introduction, the `sign
 assumption' (f5) is needed only to prove the uniqueness part
 of Theorem \ref{thm:Main}, since it allows us to invoke
 Theorem \ref{thm.uniqueness}; all the other results
 we are going to establish in this section actually hold under assumptions (f1)--(f4) solely.
 \end{remark}

 To begin with, we set
 $$F(x,u) = \int_{0}^{u}f(x,t)\, dt,$$ 
 and we consider the functional $E : \mathbb{X}_p(\Omega) \to \mathbb{R}$ defined as follows:
\begin{equation}\label{eq:defE}
  E(u):= \frac{1}{p} \int_{\Omega}|\nabla u|^p \, dx + 
  \frac{1}{p}\iint_{\mathbb{R}^{2n}}\!\!\!\frac{|u(x)-u(y)|^p}{|x-y|^{n+sp}}\, dx\, dy 
  - \int_{\Omega}F(x,u)\, dx.
\end{equation}
The functional $E$ is well-defined, differentiable and its critical 
points are weak solutions of problem \eqref{eq.pbDirSec2}.
\begin{proposition} \label{prop:sufficient}
Let $E$ be the functional defined in \eqref{eq:defE}, and assume that
\[
\lambda_1(\mathcal{L}_{p,s}-a_0)<0<\lambda_1 (\mathcal{L}_{p,s}-a_{\infty}).
\]
Then, the following hold:
\begin{itemize}
\item[(a)]$E$ is coercive on $\xp$.
\item[(b)]$E$ is weakly l.s.c.\,in $\xp$, so it has a minimum $v \in \xp$.
\item[(c)]There exists $\phi \in \mathbb{X}_p(\Omega)$ such that $E(\phi)<0$, so that
 $$\min_{u\in \xp}E(u)<0,$$ and $u=|v|$ is a solution to \eqref{eq.pbDirSec2}.
\end{itemize}
\begin{proof}
 (a)\,\,It is sufficient to note that, by its very definition,
\begin{equation}
E(u) \geq J(u):=  \dfrac{1}{p} \int_{\Omega}|\nabla u|^p \,  dx - \int_{\Omega}F(x,u)\, dx,
\end{equation}
\noindent for every $u \in \mathbb{X}_p(\Omega)$. Since we can identify $\mathbb{X}_p(\Omega)$ with 
$W^{1,p}_{0}(\Omega)$, the functional $J$ is precisely the one considered in \cite{DiazSaa} and therefore 
coercive, see \cite{BO} for the details in the linear case $p=2$. For completeness, we recall that the 
condition 
$$\lambda_1 (\mathcal{L}_{p,s}-a_{\infty})>0$$ 
is  used at this stage.
\medskip

(b)\,\,
 Let $u\in\xp$ be fixed, and let
 $\{u_n\}_n$ 
 be a sequence in $\xp$ which we\-a\-kly converges to $u$ as $n\to+\infty$. By (f3), we have
 $$|F(x,u)| \leq c_p (|u| + |u|^p);$$ 
 hence, by the Rellich-Kondrachev theorem we get
 $$\lim_{n\to +\infty}\int_{\Omega}F(x,u_n) \,  dx = \int_{\Omega}F(x,u)\, dx,$$
 which immediately implies the claim.
\medskip

(c)\,\,To prove this assertion, we can follow the argument originally presented in 
\cite{BO}. Since $\lambda_1(\mathcal{L}_{p,s}-a_0)<0$, there exists $\phi\in \mathbb{X}_p(\Omega)$ such 
that $\|\phi\|_{L^p(\Omega)} = 1$ and 
\begin{equation}\label{eq:sxp}
 \int_{\Omega}|\nabla \phi|^p \, dx + 
  \iint_{\mathbb{R}^{2n}} \dfrac{|\phi(x)-\phi(y)|^p}{|x-y|^{n+sp}} \, dx\, dy
 <\int_{\{\phi\neq 0\}} a_0\, |\phi|^p dx.
\end{equation}
 We then claim that it is not restrictive to assume that $\phi \geq 0$ 
 and $\phi\in L^{\infty}(\mathbb{R}^n)$. 
 In fact, since $||x|-|y||\leq |x-y|$ for every $x,y\in \R$, from \eqref{eq:sxp} we find
 \begin{align*} 
  & 
   \int_{\Omega}|\nabla |\phi||^p \, dx + 
   \iint_{\mathbb{R}^{2n}} \frac{||\phi(x)|-|\phi(y)||^p}{|x-y|^{n+sp}}\, dx\, dy \\
  & \qquad\leq 
     \int_{\Omega}|\nabla \phi|^p \, dx + 
      \iint_{\mathbb{R}^{2n}} \dfrac{|\phi(x)-\phi(y)|^p}{|x-y|^{n+sp}} \, dx\, dy
    <\int_{\{\phi\neq 0\}} a_0\, |\phi|^p dx,
\end{align*} 
 so that we can assume $\phi\geq 0$. As for the assumption $\phi\in L^\infty(\R^n)$, 
 we define 
 $$\phi_M=\min \{\phi,M\}\qquad (\text{for $M > 0$}).$$ 
 As usual, $\phi_M\in \xp$; moreover, since a direct computation gives 
  $$|\phi_M(x)-\phi_M(y)|\leq |\phi(x)-\phi(y)|,$$
  from \eqref{eq:sxp} we obtain 
\begin{align*}
 & \int_{\Omega}|\nabla \phi_M|^p \, dx + 
  \iint_{\mathbb{R}^{2n}} \frac{|\phi_M(x)-\phi_M(y)|^p}{|x-y|^{n+sp}} \, dx\, dy\\
 & \qquad \leq 
   \int_{\Omega}|\nabla \phi|^p \, dx + \iint_{\mathbb{R}^{2n}} 
    \frac{|\phi(x)-\phi(y)|^p}{|x-y|^{n+sp}} \, dx\, dy
   <\int_{\{\phi\neq 0\}} a_0\, |\phi|^p dx.
\end{align*}
 On the other hand, since $a_0$ is bounded from below (see \eqref{eq:azerobd}),
 we have
 $$
 \int_\Omega a_0\phi^p\leq \liminf_{M\to+\infty}\int_\Omega a_0\phi_M^p;$$ 
 as a consequence, can find $M>0$ large enough so that
 $$
   \int_{\Omega}|\nabla \phi_M|^p \, dx + \iint_{\mathbb{R}^{2n}}
    \frac{|\phi_M(x)-\phi_M(y)|^p}{|x-y|^{n+sp}} \, dx\, dy
    <\int_{\{\phi\neq 0\}} a_0\, |\phi_M|^p\, dx.
$$
 Summing up, by replacing $\phi$ with $|\phi_M|$, we can choose $\phi\geq0$ and bounded.
 \vspace*{0.05cm}
  
 Now, we have that
 $$
  \liminf_{u\to 0}\frac{F(x,u)}{u^p}\geq \frac{a_0(x)}{p}
 $$
and proceeding as in \cite[Proof of (15)]{BO} we get
\[
\liminf_{\varepsilon\to 0}\int_{\mathbb{R}^n}\frac{F(x,\varepsilon\phi)}{\varepsilon^p}\geq \frac{1}{p}\int_{\{\phi\neq 0\}} a_0\phi^p.
\]
Therefore using \eqref{eq:sxp} we conclude that
$$
 \int_{\Omega}|\nabla \phi|^p \, dx + \iint_{\mathbb{R}^{2n}}
  \frac{|\phi(x)-\phi(y)|^p}{|x-y|^{n+sp}} \, dx\, dy - 
  p\int_{\mathbb{R}^n}\frac{F(x,\varepsilon\phi)}{\varepsilon^p}< 0
$$
for any $\varepsilon>0$ small enough. Clearly, the latter can be rewritten as 
$$E(\varepsilon\phi)<0,$$ 
and this closes the proof.
\end{proof}
\end{proposition}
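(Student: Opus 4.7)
The plan is to prove the three parts of Proposition \ref{prop:sufficient} in order, each building on the previous.

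For (a), my approach is to combine assumption (f4) with the variational characterization of $\lambda_1(\LL_{p,s}-a_\infty)$. Since $t\mapsto f(x,t)/t^{p-1}$ is decreasing with limit $a_\infty(x)$ at infinity, for any $\delta>0$ I expect to show $F(x,u)\le \tfrac{1}{p}(a_\infty(x)+\delta)u^p + C_\delta$ for $u\ge 0$ (truncating $a_\infty$ from below when it equals $-\infty$, since that case is even more favorable). Substituting into $E$ yields
$$
pE(u) \ge \mathcal{Q}_{p,s}(u) - \int_{\{u\neq 0\}}a_\infty |u|^p\,dx - \delta\,\|u\|_{L^p(\Omega)}^p - C_\delta|\Omega|,
$$
and the definition of $\lambda_1(\LL_{p,s}-a_\infty)$ bounds the first two terms from below by $\lambda_1(\LL_{p,s}-a_\infty)\|u\|_{L^p(\Omega)}^p$. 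Choosing $\delta$ strictly smaller than $\lambda_1(\LL_{p,s}-a_\infty)$ gives coercivity in $L^p(\Omega)$, and then in $\xp$ via the Poincaré-type norm on $\xp$.

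For (b), the two seminorm terms in $E$ are convex and continuous on $\xp$, hence weakly lower semicontinuous. For the potential term, the growth bound (f3) gives $|F(x,u)|\le c_p(|u|+|u|^p)$, so the compact embedding $\xp\hookrightarrow\hookrightarrow L^p(\Omega)$ (Rellich--Kondrachev) together with a Vitali or dominated-convergence argument along a subsequence shows that $u\mapsto\int_\Omega F(x,u)\,dx$ is weakly continuous. Combined with (a), the direct method of the calculus of variations produces a minimizer in $\xp$.

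For (c), I use $\lambda_1(\LL_{p,s}-a_0)<0$ to select $\phi\in\xp$ with $\|\phi\|_{L^p(\Omega)}=1$ and $\mathcal{Q}_{p,s}(\phi)<\int_{\{\phi\neq 0\}}a_0|\phi|^p\,dx$. I first reduce to $\phi\ge 0$ by replacing $\phi$ with $|\phi|$: the inequality $\bigl|\,|a|-|b|\,\bigr|\le|a-b|$ shows that neither the local nor the nonlocal seminorm increases. Next I truncate $\phi_M:=\min\{\phi,M\}\in\xp$; the seminorms again do not increase, and since $a_0\ge -c_f$ by \eqref{eq:azerobd}, monotone convergence gives $\int a_0\phi_M^p\,dx\to\int a_0\phi^p\,dx$, preserving the strict inequality for $M$ large enough. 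With $\phi$ nonnegative and bounded, the pointwise limit $F(x,\varepsilon\phi)/(\varepsilon\phi(x))^p\to a_0(x)/p$ and Fatou's lemma yield
$$
\liminf_{\varepsilon\to 0^+}\frac{E(\varepsilon\phi)}{\varepsilon^p}\le \frac{1}{p}\Bigl(\mathcal{Q}_{p,s}(\phi)-\int_{\{\phi\neq 0\}}a_0\phi^p\,dx\Bigr)<0,
$$
so $E(\varepsilon\phi)<0$ for small $\varepsilon$. Consequently the minimizer $u_0$ from (b) has $E(u_0)<0$, so $u_0\not\equiv 0$ and solves \eqref{eq.pbDirSec2} weakly; replacing $u_0$ by $|u_0|$ (which does not increase $E$) makes it nonnegative, and Theorem \ref{thm.SMPWeak} yields $u_0>0$ a.e.

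The hard part will be handling the possible unboundedness of $a_0$ (which may be $+\infty$) and $a_\infty$ (which may be $-\infty$). In (c) this forces the reduction to a bounded $\phi$ before invoking Fatou's lemma on $\int F(x,\varepsilon\phi)/\varepsilon^p\,dx$; in (a) it requires careful cutoffs on the subset of $\Omega$ where $a_\infty$ is very negative, so that the spectral-gap estimate remains meaningful and absorbs the lower-order error.
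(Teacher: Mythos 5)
Your overall structure matches the paper's, and parts (b) and (c) are essentially the paper's argument: convexity plus Rellich--Kondrachev for weak lower semicontinuity, and in (c) the same sequence of reductions (replace $\phi$ by $|\phi|$, truncate to $\phi_M$, pass to the limit using that $a_0\ge -c_f$, then a Fatou-type estimate on $F(x,\varepsilon\phi)/\varepsilon^p$). Part (a) is where you genuinely diverge, and your route is more direct and more faithful to the stated hypothesis: you retain the full form $\mathcal{Q}_{p,s}$ and invoke the variational definition of $\lambda_1(\mathcal{L}_{p,s}-a_\infty)$ as given in \eqref{eq:deflambdaBO}, whereas the paper discards the nonlocal seminorm, reduces to the purely local functional $J(u)=\frac1p\int_\Omega|\nabla u|^p-\int_\Omega F(x,u)\,dx$, and cites D\'iaz--Sa\'a and Brezis--Oswald. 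That reduction is delicate: coercivity of $J$ hinges on $\lambda_1(-\Delta_p-a_\infty)>0$, which is \emph{a priori stronger} than $\lambda_1(\mathcal{L}_{p,s}-a_\infty)>0$, since the nonnegative nonlocal term only raises the eigenvalue; your direct estimate avoids this issue entirely. However, your sketch of (a) leaves open the same hard step that the paper offloads to the references: the bound $F(x,u)\le\frac1p(a_\infty(x)+\delta)u^p+C_\delta$ requires a threshold $T_\delta(x)$ beyond which $f(x,t)/t^{p-1}\le a_\infty(x)+\delta$, and $T_\delta(x)$ is not uniform in $x$, so $C_\delta$ is not obviously an $x$-independent constant. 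The standard remedy (truncate $a_\infty$ to $a_\infty^k:=\min\{a_\infty,k\}$, show $\lambda_1(\mathcal{L}_{p,s}-a_\infty^k)>0$ for large $k$ by approximation, then estimate against the bounded weight) needs to be carried out. You correctly flag this in your closing paragraph but do not fill it in, so (a) remains a plan rather than a complete proof.
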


Concerning the ``necessity'' part, we start from the next result.
\begin{lemma} \label{lem:necessity}
Let $u\in \mathbb{X}_p(\Omega)$ be a solution of  
\eqref{eq.pbDirSec2}. Then
\[
\lambda_1(\mathcal{L}_{p,s}-a_0)<0.
\]
\end{lemma}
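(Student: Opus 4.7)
The natural plan is to use $u$ itself (normalized) as the test function in the variational characterization of $\lambda_1(\LL_{p,s}-a_0)$. By Corollary \ref{cor.posSol}, $u>0$ a.e.\ in $\Omega$, so the set $\{u\neq 0\}$ coincides (up to null sets) with $\Omega$, and using $u$ as a test function in the weak formulation \eqref{eq.defWeakSol} yields the fundamental identity
$$
\mathcal{Q}_{p,s}(u) \;=\; \int_\Omega f(x,u)\,u\,d x.
$$

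The next step is to exploit assumption (f4) to obtain a strict \emph{pointwise} upper bound. For almost every $x\in\Omega$ where $u(x)>0$, strict monotonicity of $t\mapsto f(x,t)/t^{p-1}$ gives
$$
\frac{f(x,u(x))}{u(x)^{p-1}} \;<\; \frac{f(x,u(x)/2)}{(u(x)/2)^{p-1}} \;\leq\; \sup_{t>0}\frac{f(x,t)}{t^{p-1}} \;=\; a_0(x),
$$
where the supremum equals the limit as $t\to 0^+$ since $t\mapsto f(x,t)/t^{p-1}$ is decreasing. Crucially, the intermediate step with $u(x)/2$ is what upgrades ``$\leq$'' to ``$<$'' even though $a_0$ is defined as a limit. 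This pointwise inequality, valid a.e.\ on $\Omega$, gives $f(x,u)\,u < a_0(x)\,u^p$ a.e.

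Integrating, one splits into two cases according to whether $\int_\Omega a_0\,u^p\,d x$ is finite or not. By (f3), $|f(x,u)u|\leq c_p(u+u^p)\in L^1(\Omega)$, so the left-hand side of the identity above is finite. If $\int_\Omega a_0\,u^p\,d x=+\infty$, then immediately $\mathcal{Q}_{p,s}(u)-\int_{\{u\neq 0\}}a_0\,u^p\,d x=-\infty$. Otherwise, the strict pointwise inequality on a set of full measure in $\Omega$ (where $u>0$) yields
$$
\mathcal{Q}_{p,s}(u)\;=\;\int_\Omega f(x,u)\,u\,d x \;<\; \int_\Omega a_0(x)\,u^p\,d x \;=\; \int_{\{u\neq 0\}} a_0\,|u|^p\,d x.
$$

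Finally, setting $v:=u/\|u\|_{L^p(\Omega)}\in\xp$, which is admissible in the definition \eqref{eq:deflambdaBO} since $\|v\|_{L^p(\Omega)}=1$, the previous strict inequality (homogeneous of degree $p$) yields $\mathcal{Q}_{p,s}(v)-\int_{\{v\neq 0\}}a_0\,|v|^p\,d x<0$, whence $\lambda_1(\LL_{p,s}-a_0)<0$. The only delicate step is the strict inequality derived from (f4); everything else is direct. The argument uses only (f1)--(f4) and the positivity provided by Theorem \ref{thm.SMPWeak} via Corollary \ref{cor.posSol}, and does \emph{not} require the extra condition (f5).
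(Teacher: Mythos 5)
Your proposal is correct and follows essentially the same route as the paper: test the weak formulation with $u$ itself, invoke the strong maximum principle (Corollary \ref{cor.posSol}) to get $u>0$ a.e., use the strict monotonicity in (f4) to derive the pointwise strict bound $f(x,u)/u^{p-1}<a_0(x)$, integrate, and normalize. The only (welcome) refinement on your part is the explicit case split on whether $\int_\Omega a_0\,u^p\,dx$ is finite, together with the intermediate evaluation at $u(x)/2$ to justify strictness; the paper states these steps more tersely but uses the same ideas.
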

\begin{proof}
 On one hand, by the very definition of $\lambda_1(\LL_{p,s}-a_0)$, we have
$$\lambda_1(\mathcal{L}_{p,s}-a_0)\leq 
\frac{\mathcal{Q}_{p,s}(u)-\int_{\{u\neq 0\}}a_0\,|u|^p}{\|u\|_{L^p(\Omega)}},
$$
 where $\mathcal{Q}_{p,s}$ is as in \eqref{eq:defQform}. On the other hand, since $u$ solves \eqref{eq.pbDirSec2}, we have that
\[
\int_{\Omega}|\nabla u|^p\, dx + 
\iint_{\R^{2n}}\frac{|u(x)-u(y)|^p}{|x-y|^{n+ps}} =  \mathcal{Q}_{p,s}(u) = 
\int_\Omega f(x,u)u\, dx.
\]
By the strong maximum principle, $u>0$ in $\Omega$. Therefore, by definition of $a_0$ and by assumption (f4), we get
that
\[
\frac{f(x,u)}{u^{p-1}}<a_0(x) \mbox{ a.e. in }\Omega,
\]
so that
\[
\int_\Omega f(x,u)u\, dx <\int_\Omega a_0u^p dx
\]
and the conclusion follows.
\end{proof}

%Let us remark that existence implies uniqueness by Theorem \ref{thm.uniqueness} thanks to the following result.
%\begin{proposition}\label{Propf5holds}
%If $\lambda_1(\mathcal{L}_{p,s}-a_0)<0$, then \emph{(f5)} holds.
%\end{proposition}
%\begin{proof}
%First observe that by (f4), $f(x,\cdot)$ can change sign at most once in $(0,+\infty)$. Thus, assume by contradiction that it is negative in a right neighborhood of $t=0$. Then  $a_0(x)\leq 0$ for a.e. $x\in \Omega$. As a consequence
%\[
%\lambda_1(\mathcal{L}_{p,s}-a_0)=\!\!\!\inf_{\begin{subarray}{c} u\in \xp \\
% \|u\|_{L^p(\Omega)} = 1
% \end{subarray}}\bigg\{\mathcal{Q}_{p,s}(u)-\int_{\{u\neq 0\}}a_0\,|u|^p\,\d x\bigg\} \geq 0,
%\]
%which is a contradiction.
%\end{proof}
Although up to now we have been able to treat the general case, we are now led to focus on the semilinear case.
\begin{proposition}\label{PropAinfPos}
Assume that $ p =2$, and let $u\in \mathbb{X}_2(\Omega)$ be a nonnegative solution of 
problem \eqref{eq.pbDirSec2}. Then
$$\lambda_1 (\mathcal{L}_{2,s}-a_{\infty}) >0.$$
\begin{proof}
 First of all we observe that, in view of Theorem \ref{thm:uzeroglobalBd}, we know that
 $$u\in L^\infty(\Omega).$$ Hence, 
 as in \cite{BO}, we define the bounded and indefinite weight
$$\overline{a}(x) := \dfrac{f(x,\|u\|_{L^{\infty}(\Omega)}+1)}{\|u\|_{L^{\infty}(\Omega)}+1}.$$
Notice that $\overline{a} \in L^{\infty}(\Omega)$ by (f2).
Then, we consider the auxiliary eigenvalue problem
\begin{equation}
\left\{ \begin{array}{rl} 
             \mathcal{L}_{2,s}\psi - \overline{a}(x) \psi = \mu \psi & \textrm{in }\Omega,\\
             \psi \geq 0 & \textrm{in } \Omega,\\
             \psi = 0 &\textrm{in } \mathbb{R}^{n}\setminus \Omega.
\end{array}\right.
\end{equation}
By Proposition  \ref{Proposition3} we get the existence of a principal eigenvalue with associated bounded and nonnegative eigenfuction $\psi \in \tilde{H}$. We can therefore use such a $\psi$ as test function for \eqref{eq.pbDirSec2}, finding
\begin{equation}
\int_{\Omega}u\psi \, (\overline{a}+\mu) \, dx = \int_{\Omega} f(x,u)\psi \, dx.
\end{equation}
Clearly, 
$$\int_{\Omega}u\psi \, (\overline{a}+\mu) \, dx = \int_{\Omega \cap \{u>0\}}u\psi \, (\overline{a}+\mu) \, dx,$$
\noindent and on $\Omega \cap \{u>0\}$ we can exploit condition (f4), which yields
$$\int_{\Omega \cap \{u>0\}} f(x,u)\psi \, dx > \int_{\Omega \cap \{u>0\}} \overline{a}(x) u\psi \, dx = \int_{\Omega} \overline{a}(x) u\psi \, dx.$$
Therefore, we find that
$$\mu \int_{\Omega}u\psi \, dx >0,$$
\noindent and then conclude as in \cite{BO}.
\end{proof}
\end{proposition}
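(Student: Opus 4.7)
The plan is to bypass the possible unboundedness of $a_\infty$ by introducing an auxiliary bounded weight $\bar a$ that dominates $a_\infty$ pointwise, reducing the desired inequality to the strict positivity of the smallest eigenvalue associated with this bounded weight, for which Proposition \ref{Proposition3} applies and a clean test-function argument is available thanks to the symmetry of the quadratic form (this is precisely where $p=2$ enters in an essential way).

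First I would invoke Theorem \ref{thm:uzeroglobalBd} to get $u\in L^\infty(\Omega)$, set $M:=\|u\|_{L^\infty(\Omega)}$, and define
\[
\bar a(x):=\frac{f(x,M+1)}{M+1},
\]
which belongs to $L^\infty(\Omega)$ by (f2). By (f4), one has $f(x,t)/t<\bar a(x)$ for every $t>M+1$, hence, letting $t\to+\infty$, $a_\infty(x)\le \bar a(x)$ a.e.\,in $\Omega$. From the variational characterization \eqref{eq:deflambdaBO} it then follows that
\[
\lambda_1(\mathcal{L}_{2,s}-a_\infty)\ge \lambda_1(\mathcal{L}_{2,s}-\bar a),
\]
so it suffices to prove $\mu:=\lambda_1(\mathcal{L}_{2,s}-\bar a)>0$.

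Next, I would apply Proposition \ref{Proposition3} (with $a$ there replaced by $-\bar a$) to obtain the principal eigenvalue $\mu$, realized by a nonnegative eigenfunction $\psi\in\mathbb{X}_2(\Omega)$, which is in fact strictly positive a.e.\,by Theorem \ref{thm.SMPWeak} (see also Remark \ref{rem.Eigenvalue}); moreover $\psi\in L^\infty(\Omega)$ by Theorem \ref{thm:uzeroglobalBd}. Testing the equation for $u$ against $\psi$, testing the eigenvalue equation $\mathcal{L}_{2,s}\psi-\bar a\psi=\mu\psi$ against $u$, and using the symmetry of the bilinear form associated to $\mathcal{L}_{2,s}$ (the key step where $p=2$ is unavoidable), I would obtain the identity
\[
\int_\Omega f(x,u)\,\psi\,d x=\int_\Omega(\bar a+\mu)\,u\,\psi\,d x.
\]
On $\{u>0\}$, the strict monotonicity of $t\mapsto f(x,t)/t$ and the bound $u\le M<M+1$ give $f(x,u)>\bar a u$; combined with $f(x,0)\ge 0$ from \eqref{Segnof} and $\psi\ge 0$, this yields $\int_\Omega f(x,u)\psi\,d x>\int_\Omega \bar a u\psi\,d x$. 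Inserting this into the identity above leaves $\mu\int_\Omega u\psi\,d x>0$, and since $u>0$ a.e.\,by Corollary \ref{cor.posSol} and $\psi>0$ a.e., $\int_\Omega u\psi\,d x>0$, whence $\mu>0$.

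The main obstacle is the cross-testing step, which succeeds here only because the local and nonlocal parts of $\mathcal{L}_{2,s}$ both come from a symmetric bilinear form: for $p\ne 2$ one would need nonlinear Green identities of Diaz-Saa type in the nonlocal setting, which, as the authors emphasize in the introduction, are still missing. A secondary but essential point is to upgrade $\psi$ from nonnegative to strictly positive a.e.\,via the strong maximum principle of Section \ref{sec:SMP}, so that the pointwise strict inequality on $\{u>0\}$ really translates into $\mu\int u\psi>0$ with $\int u\psi>0$.
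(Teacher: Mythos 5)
Your proof is correct and follows essentially the same route as the paper's: global boundedness of $u$, the auxiliary bounded weight $\overline a$, Proposition \ref{Proposition3} for the principal eigenpair $(\mu,\psi)$, cross-testing via the symmetric bilinear form (the point where $p=2$ enters), and the strict monotonicity (f4) to conclude $\mu>0$. The only additions you make are to spell out the details the paper leaves to ``conclude as in \cite{BO}'', namely $a_\infty\le\overline a$ a.e.\,and hence $\lambda_1(\mathcal{L}_{2,s}-a_\infty)\ge\lambda_1(\mathcal{L}_{2,s}-\overline a)=\mu$, and to record explicitly the strict positivity of $\psi$ via the strong maximum principle.
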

By combining the results in this section, we can finally prove Theorem \ref{thm:Main}.
\begin{proof}[Proof of Theorem $\ref{thm:Main}$]
 As for the uniqueness, it is a consequence of 
 Lemma \ref{lem:necessity},
 together with Theorem \ref{thm.uniqueness}.
 Moreover, the strict positivity is contained in Corollary \ref{cor.posSol}.
 
 The existence part of assertion (1) is exactly the content of Proposition \ref{prop:sufficient}.
 As for assertion (2), it follows from
 Lemma \ref{lem:necessity} and Pro\-po\-si\-tion \ref{PropAinfPos}.
\end{proof}
 

\begin{thebibliography}{99}
 
\bibitem{1}
{\sc W. Allegretto, Y.X. Huang}, {\it A Picone's identity for the $p -$Laplacian and applicdations}, Nonlinear Anal. {\bf 32} (1998), 819--830.

\bibitem{Barles1}
{\sc G. Barles, C. Imbert}, {\it Second-order elliptic integro-differential 
equations: viscosity so\-lu\-tions’ theory revisited},
Ann. Inst. H. Poincar\'{e} Anal. Non Lin\'{e}aire {\bf 25} (2008), no. 3, 567--585.

\bibitem{Barles2}
{\sc G. Barles, E. Chasseigne, A. Ciomaga, C. Imbert}, 
{\it Lipschitz regularity of solutions for mixed integro-differential
equations}, J. Differential Equations {\bf 252} (2012), no. 11, 6012--6060.
 
\bibitem{BDVVAcc}
{\sc S. Biagi, S. Dipierro, E. Valdinoci, E. Vecchi},
{\em Semilinear elliptic equations involving mixed local
and nonlocal operators},
 Proc. Roy. Soc. Edinburgh Sect. A \textbf{151} (2021), no. 5, 1611--1641

\bibitem{BDVV}
{\sc S. Biagi, S. Dipierro, E. Valdinoci, E. Vecchi},
{\em Mixed local and nonlocal elliptic operators: regularity and maximum principles},
 Comm. Partial Differential Equations {\bf 47}(3), (2022), 585--629.
 
\bibitem{BDVVperp}
{\sc S. Biagi, S. Dipierro, E. Valdinoci, E. Vecchi},
{\em A quantitative Faber-Krahn inequality for some mixed local and nonlocal operators}, 
to appear in J. Anal. Math.

\bibitem{BMV}
{\sc S. Biagi, D. Mugnai and E. Vecchi},
{\em Necessary condition in a Brezis-Oswald-type problem for mixed local and nonlocal operators}, Appl. Math. Lett. {\bf 132}, (2022), 108177.
 
 \bibitem{BF}
{\sc L. Brasco, G. Franzina}, {\em Convexity properties of Dirichlet integrals and Picone-type inequalities}, Kodai Math. J. {\bf37} (2014), 769--799.

 \bibitem{BS}
 {\sc L. Brasco, M. Squassina},
 {\em Optimal solvability for a nonlocal problem at critical growth}.
 J. Differential Equations {\bf 264} (2018), 2242--2269.
 
 \bibitem{Brezis}
 {\sc H. Brezis},
 \emph{Functional analysis, Sobolev spaces and partial differential equations}, 
  U\-ni\-ver\-si\-text, Springer, New York, 2011. 
  
  \bibitem{BO}
{\sc H. Brezis, L. Oswald}, 
{\em Remarks on sublinear elliptic equations},  
Nonlinear Anal. {\bf 10} (1986), 55--64.
  
  \bibitem{BSM}
 {\sc S. Buccheri, J. V. da Silva, L. H. de Miranda},
 \emph{A System of Local/Nonlocal $p$-Laplacians: The Eigenvalue Problem and Its Asymptotic 
 Limit as $p\to\infty$}, Asymptot. Anal. {\bf 128}(2), (2022), 149--181.
 
 \bibitem{SilvaSalort}
 {\sc J.V. da Silva, A.M. Salort},
 {\em A limiting problem for local/nonlocal {$p$}-{L}aplacians with
              concave-convex nonlinearities},
 Z. Angew. Math. Phys. \textbf{71} (2020), Paper No. 191, 27pp.

 \bibitem{CKP}
 {\sc A. Di Castro, T. Kuusi, G. Palatucci},
 {\em  Local behavior of fractional $p$-minimizers}, 
 Ann. Inst. H. Poincar\'{e} Anal. Non Linéaire \textbf{33} (2016), no. 5, 1279--1299.

\bibitem{DPLV1}
{\sc S. Dipierro, E. Proietti Lippi, E. Valdinoci}, {\em Linear theory for a mixed operator with Neumann conditions}, Asymptot. Anal. {\bf 128}(4), (2022), 571--594.

\bibitem{DPLV2}
{\sc S. Dipierro, E. Proietti Lippi, E. Valdinoci}, {\em (Non)local logistic equations with Neumann conditions}, to appear in Ann. Inst. H. Poincar\'{e} Anal. Non Linéaire.

 \bibitem{DiazSaa}
 {\sc J.I. D\'{i}az, J.E. Sa\'{a}},
 {\em Existence et unicit\'{e} de solutions positives pour certaines \'{e}quations elliptiques quasilin\'{e}aires},
C. R. Acad. Sci. Paris S\'{e}r. I Math. {\bf 305} (1987), no. 12, 521--524.
 
  \bibitem{guida}
  {\sc E. Di Nezza, G. Palatucci, E. Valdinoci}, 
  {\em Hitchhiker's guide to the fractional Sobolev spaces},
Bull. Sci. Math. {\textbf{136}} (2012), no. 5, 521--573. 
  
  \bibitem{FMP}
{\sc G. Fragnelli, D. Mugnai, N. Papageorgiou},
{\em The Brezis--Oswald result for quasilinear Robin problems},
Adv. Nonlinear Stud. {\bf 16} (2016), no. 3, 603--622.

\bibitem{frapa}
 {\sc G. Franzina, G. Palatucci}, 
 \emph{Fractional $p-$eigenvalues}, 
 Riv. Math. Univ. Parma (N.S.) {\bf 5} (2014), 373--386.
 
 \bibitem{Kinnunen}
 {\sc P. Garain, J. Kinnunen},
 {\em On the regularity theory for mixed local and nonlocal quasilinear elliptic equations}, to appear in  Trans. Amer. Math. Soc. 
 
 
 \bibitem{Giusti}
  {\sc E. Giusti}, 
 \emph{Direct Methods in the Calculus of Variations}, 
 World Scientific Publishing Co., Inc., River Edge, NJ, 2003.

\bibitem{im}
{\sc A. Iannizzotto, D. Mugnai}, {\em Optimal solvability for the fractional p-Laplacian with Dirichlet conditions}, arXiv:2206.08685v2.

 \bibitem{Leoni}
 {\sc G. Leoni},
 {\em A first course in Sobolev space. Second edition}, 
 Graduate Studies in Mathematics, 181. American Mathematical Society, Providence, RI, 2017.
 
 \bibitem{LuPra}
{\sc M. Lucia, S. Prashanth}, {\em Simplicity of principal eigenvalue for $p-$Laplace operator
with singular indefinite weight}, Arch. Math. (Basel)  {\bf86} (2006), 79--89.

 \bibitem{MoMoPa}
{\sc D. Motreanu, V. V. Motreanu, N.S. Papageorgiou}, {\em Topological and Variational Methods with Applications to Nonlinear Boundary Value Problems}, Springer, New York (2014).
 
 \bibitem{MPV}
 {\sc D. Mugnai, A. Pinamonti, E. Vecchi},
 {\em Towards a Brezis-Oswald-type result for fractional 
 problems with Robin boundary conditions}, 
 Calc. Var. Partial Differential Equations \textbf{59} (2020), no. 2.
 
 \bibitem{mpl}
 {\sc D. Mugnai, E. Proietti Lippi}, 
 \emph{Neumann fractional p-Laplacian: Eigenvalues and existence results}, 
 Nonlinear Anal. {\bf 188} (2019), 455--474.

\bibitem{PS}
{\sc P. Pucci, J. Serrin}, 
 {\em The maximum principle}, 
 Progress in Nonlinear Differential Equations and their Applications, 
 73. Birkh\"auser Verlag, Basel, 2007.
 
 \bibitem{SeVa}
 {\sc R. Servadei, E. Valdinoci}, 
 {\em Variational methods for non-local operators of elliptic}, type. Discrete Contin. Dyn. Syst. {\bf 33}(5), (2013), 2105--2137.

 \end{thebibliography}
\end{document}